\def\del{\delta}
\numberwithin{equation}{section}
\theoremstyle{plain}
\newtheorem{thm}{Theorem}[section]
\newtheorem{cor}[thm]{Corollary}
\newtheorem{lem}[thm]{Lemma}
\newtheorem{prop}[thm]{Proposition}
\newtheorem{defn}[thm]{Definition}
\newtheorem{exm}[thm]{Example}
\theoremstyle{remark}
\newtheorem{rem}[thm]{Remark}
\renewcommand{\mod}{\operatorname{mod}\nolimits}
\newcommand{\Hom}{\operatorname{Hom}\nolimits}
\newcommand{\End}{\operatorname{End}\nolimits}
\newcommand{\Ext}{\operatorname{Ext}\nolimits}
\newcommand{\Cone}{\operatorname{Cone}\nolimits}
\newcommand{\CoCone}{\operatorname{CoCone}\nolimits}
\newcommand{\B}{\mathcal B}
\newcommand{\U}{\mathcal U}
\newcommand{\V}{\mathcal V}
\newcommand{\A}{\mathcal A}
\newcommand{\h}{\mathcal H}
\newcommand{\s}{\mathcal S}
\newcommand{\T}{\mathcal T}
\newcommand{\I}{\mathcal I}
\newcommand{\D}{\mathcal D}
\newcommand{\J}{\mathcal J}
\newcommand{\K}{\mathcal K}
\newcommand{\R}{\mathcal R}
\newcommand{\X}{\mathcal X}
\newcommand{\C}{\mathcal C}
\newcommand{\E}{\mathcal E}
\newcommand{\EE}{\mathbb E}
\newcommand{\svecv}[2]{\left(\begin{smallmatrix}
      #1 \\
      #2
    \end{smallmatrix}\right)}
\newcommand{\svech}[2]{\left(\begin{smallmatrix}
      #1 & #2
\end{smallmatrix}\right)}
\def\Ab{\mathsf{Ab}}
\renewcommand{\emph}{\textit}
\renewcommand{\phi}{\varphi}
\begin{document}

\title{Abelian categories arising from cluster tilting subcategories}\footnote{The first author is supported by the Fundamental Research Funds for the Central Universities (Grants No.2682018ZT25). The second author is supported by the Hunan Provincial Natural Science Foundation of China (Grants No.2018JJ3205) and the NSF of China (Grants No.11671221)}
\author{Yu Liu and Panyue Zhou}
\address{Department of Mathematics, Southwest Jiaotong University, 610031, Chengdu, Sichuan, People's Republic of China}
\email{liuyu86@swjtu.edu.cn}
\address{College of Mathematics, Hunan Institute of Science and Technology, 414006, Yueyang, Hunan, People's Republic of China}
\email{panyuezhou@163.com}
\thanks{The authors wish to thank Professor Bin Zhu for their helpful advices.}

\begin{abstract}
For a triangulated category $\T$, if $\C$ is a cluster-tilting subcategory of $\T$, then the quotient category $\T/\C$ is an abelian category. Under certain conditions, the converse also holds. This is an very important result of cluster-tilting theory, due to Koenig-Zhu and Beligiannis.

Now let $\B$ be a suitable extriangulated category, which is a simultaneous generalization of triangulated categories and exact categories.
We introduce the notion of pre-cluster tilting subcategory $\C$ of
$\B$, which is a generalization of cluster tilting subcategory.
We show that $\C$ is cluster tilting if and only if $\B/\C$ is abelian.
\end{abstract}

\maketitle

\section{Introduction}

Cluster tilting theory gives a way to construct abelian categories from some triangulated categories.
Let $\T$ be a triangulated category and $\C$ a cluster tilting subcategory of $\T$. Then the quotient
category $\T/\C$ is abelian. This is  due to
Koenig and Zhu \cite[Theorem 3.3]{KZ}.
Cluster tilting theory is also permitted to construct abelian categories from some
exact categories. Demonet and Liu \cite[Theorem 3.2]{DL} provided a general framework for passing from exact
categories to abelian categories by factoring out cluster tilting subcategories.

We recall the definition of cluster tilting subcategories, which was introduced by Iyama \cite[Definition 2.2]{I}.
Let $\T$ be a triangulated category or exact category and $\C$ a subcategory of $\T
$. $\C$ is called cluster tilting
if it satisfies:
\begin{itemize}
\item $\C$ is contravariantly finite and covariantly finite;
\item $\C=\C^{\bot_1}={^{\bot_1}}\C$,
where $\C^{\bot_1}=\{M\in\B~|~\Ext^1(\C,M)=0\}$ and ${^{\bot_1}}\C=\{M\in\B~|~\Ext^1(M,\C)=0\}$.
\end{itemize}
Now we consider the opposite direction: if we have an ideal quotient $\T/\D$ which is abelian, can we get any information of $\D$? When does $\D$ become a cluster tilting subcategory?   Beligiannis proved the following characterization of cluster tilting subcategories which
complements, and was inspired by Koenig and Zhu.
\begin{thm}\cite[Theorem 7.3]{B}
Let $\T$ be connected triangulated category  with a Serre functor $\mathbb{S}$ and $\C$ be a non-zero functorially finite rigid subcategory of $\B$. Then the following statements are equivalent:
\begin{itemize}
\item[(a)] $\C$ is cluster tilting;
\item[(b)] $\C$ is a maximal extension closed subcategory of $\T$ such that $\mathbb{S}\C=\C[2]$;
\item[(c)] $\T/\C$ is abelian and $\mathbb{S}\C=\C[2]$.
\end{itemize}
\end{thm}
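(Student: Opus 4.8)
The plan is to reduce the whole equivalence, via Serre duality, to a single orthogonality equation, and then to separate the three ``easy'' implications leaving (a) from the two substantive converses. Throughout one works in a Hom-finite $k$-linear triangulated category $\T$; write $D(-)=\Hom_k(-,k)$, so Serre duality supplies natural isomorphisms $\Hom_\T(X,Y)\cong D\Hom_\T(Y,\mathbb{S}X)$ with $\mathbb{S}$ commuting with $[1]$, and take $\C$ to be full, additive and closed under direct summands and isomorphisms. The first step is the observation that \emph{$\mathbb{S}\C=\C[2]$ forces $\C^{\bot_1}={}^{\bot_1}\C$}: for $X\in\C^{\bot_1}$ and $C\in\C$ one has $\Hom_\T(C,X[1])\cong D\Hom_\T(X,\mathbb{S}C[-1])$, and as $C$ ranges over $\C$ the object $\mathbb{S}C[-1]$ ranges over $\mathbb{S}\C[-1]=\C[1]$, so $X\in\C^{\bot_1}\iff X\in{}^{\bot_1}\C$; the same computation shows $\C^{\bot_1}$ is extension closed and $\mathbb{S}(\C^{\bot_1})=(\C^{\bot_1})[2]$. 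Hence \emph{under the hypothesis $\mathbb{S}\C=\C[2]$ a functorially finite $\C$ is cluster tilting exactly when $\C=\C^{\bot_1}$}, and $\C\subseteq\C^{\bot_1}$ always holds by rigidity. The second preliminary step records the \emph{approximation triangles}: a left $\C$-approximation $f\colon X\to C^{0}$ completes to a triangle $X\xrightarrow{f}C^{0}\to C^{1}\to X[1]$ with $C^{0}\in\C$, and applying $\Hom_\T(-,C'[1])$ ($C'\in\C$), using that $f$ is a left approximation and $\Ext^1_\T(\C,\C)=0$, the connecting map is killed and one gets $\Ext^1_\T(C^{1},\C)=0$, i.e.\ $C^{1}\in{}^{\bot_1}\C$; dually a right $\C$-approximation of $X$ gives $C_{1}\to C_{0}\to X\to C_{1}[1]$ with $C_{0}\in\C$ and $C_{1}\in\C^{\bot_1}$.

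Next I would dispose of the implications out of (a). For $(a)\Rightarrow(c)$, $\T/\C$ is abelian by Koenig--Zhu \cite[Theorem 3.3]{KZ}; and for $C\in\C$, Serre duality gives $\Hom_\T(\C,\mathbb{S}C[-1])\cong D\Ext^1_\T(C,\C)=0$ and, symmetrically, $\Hom_\T(\mathbb{S}^{-1}C,\C[-1])\cong D\Ext^1_\T(\C,C)=0$, which say precisely $\mathbb{S}C[-2]\in\C^{\bot_1}=\C$ and $\mathbb{S}^{-1}C[2]\in{}^{\bot_1}\C=\C$; hence $\mathbb{S}\C=\C[2]$. For $(a)\Rightarrow(b)$ the Calabi--Yau relation is the same computation, and maximality is immediate: if $\C\subseteq\D$ with $\D$ rigid (hence extension closed), then $\Ext^1_\T(\C,D)\subseteq\Ext^1_\T(\D,\D)=0$ for every $D\in\D$, so $\D\subseteq\C^{\bot_1}=\C$.

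The real content lies in the two converses, and by the first step each amounts to proving $\C^{\bot_1}\subseteq\C$. For $(c)\Rightarrow(a)$, fix $X\in\C^{\bot_1}$ together with its left approximation triangle $X\xrightarrow{f}C^{0}\xrightarrow{g}C^{1}\xrightarrow{h}X[1]$, where $C^{0}\in\C$ and $C^{1}\in{}^{\bot_1}\C$; the crucial remark is that \emph{as soon as $C^{1}\in\C$}, the map $h\in\Hom_\T(C^{1},X[1])$ must vanish (because $C^{1}\in\C$ and $X\in\C^{\bot_1}$), so the triangle splits and $X$ is a summand of $C^{0}\in\C$. Everything is thus reduced to showing that $\T/\C$ being abelian forces ${}^{\bot_1}\C=\C$, and this is the step where I expect the work to be: the plan is to exploit that in the abelian category $\T/\C$ every morphism has a kernel and a cokernel and the canonical comparison between coimage and image is invertible, and to transport these universal properties back into $\T$ --- realising kernels and cokernels in $\T/\C$ as shifts of (co)cones of representatives corrected by $\C$-approximations --- so that the resulting exactness statements, applied to the triangle above, promote $C^{1}$ from ${}^{\bot_1}\C$ into $\C$. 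For $(b)\Rightarrow(a)$ one argues in the same spirit with maximality replacing abelianness: starting from $X\in\C^{\bot_1}$ and the approximation triangles one tries to enlarge $\C$ to a strictly larger rigid subcategory unless $X$ already lies in $\C$, the delicate point being to adjoin precisely the objects produced by the (co)approximations that are needed to keep the enlargement rigid. In either converse the genuinely hard part is this interplay between the quotient (or the enlargement) and the $\C$-approximation data, rather than any single computation, and it is here that the standing assumptions --- Hom-finiteness, the Serre functor, connectedness of $\T$ --- are really put to use.
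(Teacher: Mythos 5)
Your reductions are sound as far as they go: the observation that $\mathbb{S}\C=\C[2]$ forces $\C^{\bot_1}={}^{\bot_1}\C$, the approximation triangles, and the implications out of (a) are all correct and agree with what this paper records (see the remark after the Corollary following Theorem \ref{main}, where exactly this Serre-duality computation is used to see that $\C$ is pre-cluster tilting). In fact your first step already finishes $(b)\Rightarrow(a)$, which you did not notice: you prove that $\C^{\bot_1}$ is extension closed, satisfies $\mathbb{S}(\C^{\bot_1})=\C^{\bot_1}[2]$ and contains $\C$, so maximality in (b) gives $\C=\C^{\bot_1}$ at once, with no need for the vague ``enlargement'' procedure you sketch.

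The genuine gap is $(c)\Rightarrow(a)$, which is the entire content of the theorem and for which you offer only an intention (``the plan is to exploit \dots and to transport these universal properties back into $\T$''), not an argument. Two symptoms show the plan as stated cannot close: first, connectedness of $\T$ never enters your outline, yet without it the implication is false --- take $\T=\T_1\oplus\T_2$ with $\C$ cluster tilting in $\T_1$ and $\T_2$ a nonzero semisimple (hence abelian) triangulated category; then $\T/\C\cong\T_1/\C\oplus\T_2$ is abelian but $\T_2\subseteq\C^{\bot_1}\setminus\C$. Second, your splitting remark only helps once you already know $C^1\in\C$, and nothing in the proposal produces that. The paper (which quotes this statement from \cite{B} and recovers it as the triangulated special case of Theorem \ref{main}, where $\mathcal P=\mathcal I=0$) closes exactly this gap by a different mechanism: assuming $\C\subsetneq\C^{\bot_1}=\K$, Lemma \ref{mainlem} shows every indecomposable of $\K$ outside $\C$ sits in $\EE$-triangles over objects of $\mathcal P\cap\mathcal I$, whence Lemmas \ref{maincor1} and \ref{maincor2} split $\B/(\mathcal P\cap\mathcal I)$ as a direct sum of the two nonzero extriangulated subcategories $\widehat{\K}/(\mathcal P\cap\mathcal I)$ and $\h_{\C}/(\mathcal P\cap\mathcal I)$, contradicting connectedness. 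Some such decomposition argument (this is also the engine of Beligiannis's original proof) is what your proposal is missing; the heavy lifting in Lemma \ref{mainlem} --- showing $B\in\C$ and then $B\in\mathcal P$ via the epic--monic factorizations in the abelian quotient --- is precisely the ``interplay'' you defer.
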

It is very natural to ask if the similar theory holds on exact category, which also plays an important role in representation theory. For example the extension closed subcategories of module categories of $k$-algebras (where $k$ is a field) are exact categories. Since we usually do not have Serre functors on exact categories, we are also interested in the case for triangulated categories which do not have Serre functors. Hence in this article, we will study a similar case as \cite{B} on a more generalized setting: a category called extriangulated category. The notion of an extriangulated category was introduced in \cite{NP} (please see section 2 for detailed definition of extriangulated category), which is a simultaneous generalization of exact category and triangulated category. For examples of extriangulated categories which are neither exact categories nor triangulated categories, please see \cite{NP,ZZ}. We can also define cluster tilting subcategory on extriangulated categories. Liu and Nakaoka \cite[Theorem 3.2]{LN} showed that any quotient of a extriangulated category modulo a cluster tilting subcategory carried an induced abelian structures, which generalizes both \cite[Theorem 3.3]{KZ} and \cite[Theorem 3.2]{DL}.

In this article, let $(\B,\EE,\mathfrak{s})$ be a Krull-Schmidt extriangulated category over a field $k$. Any subcategory we discuss in this article will be full and closed under isomorphisms.

\begin{rem}
If $\B$ is a $k$-linear, Hom-finite extriangulated category with
split idempotents, then it is a Krull-Schmidt category.
\end{rem}

%We may ask if we assume an abelian structure on a quotient of an extriangulated category modulo some subcategory,
%can we prove that some subcategory is cluster tilting?

Now we introduce the notion of pre-cluster tilting subcategory.
\begin{defn}
A subcategory $\B'$ of $\B$ is called \emph{contravariantly finite} if any object in $\B$ admits a right $\B'$-approximation. Moreover, it is called  \emph{strongly contravariantly finite} if any object in $\B$ admits a right $\B'$-approximation which is also a deflation. Dually we can define \emph{strongly covariantly finite}.
\end{defn}

\begin{defn}
We call $\C$ a pre-cluster tilting subcategory of $\B$ if it satisfies the following conditions:
\begin{itemize}
\item $\C$ is closed under direct sums and summands;
\item $\C$ is rigid, that is to say, $\EE(\C,\C)=0$;
\item $\C$ is strongly contravariantly finite and strongly covariantly finite.
\item $\C^{\bot_1}={^{\bot_1}}\C$.
where $\C^{\bot_1}$ is the subcategory of objects $X\in\B$ satisfying
$\EE(\C,X)=0$ and ${^{\bot_1}}\C$ is the subcategory of objects $X\in\B$ satisfying
$\EE(X,\C)=0$.
\end{itemize}
A subcategory $\C$ of $\B$ is called cluster tilting if $\C$ is a pre-cluster tilting and $\C=\C^{\bot_1}={^{\bot_1}}\C$.
\end{defn}

We give an example of pre-cluster tilting subcategory.

\begin{exm}
Let $\Lambda$ be the the $k$-algebra given by the quiver
$$\xymatrix@C=0.5cm@R0.5cm{
&&\centerdot \ar[dll]_x \\
\centerdot \ar[dr]_x &&&&\centerdot \ar[ull]_x\\
&\centerdot \ar[rr]_x &&\centerdot \ar[ur]_x
}
$$
with relation $x^3=0$. Then the AR-quiver of $\B:=\mod\Lambda$ is given by
$$\xymatrix@C=0.4cm@R0.4cm{
\centerdot \ar[dr] &&\centerdot \ar[dr] &&\centerdot \ar[dr] &&\centerdot \ar[dr] &&\centerdot \ar[dr] &&\centerdot \\
\ar@{.}[r] &\centerdot \ar[dr] \ar@{.}[rr] \ar[ur] &&\centerdot \ar@{.}[rr] \ar[dr] \ar[ur] &&\centerdot \ar[dr] \ar[ur] \ar@{.}[rr] &&\centerdot \ar@{.}[rr] \ar[dr] \ar[ur] &&\centerdot \ar[dr] \ar[ur] \ar@{.}[r] &\\
\centerdot \ar@{.}[rr] \ar[ur] &&\centerdot \ar@{.}[rr] \ar[ur] &&\centerdot \ar@{.}[rr] \ar[ur] &&\centerdot \ar@{.}[rr] \ar[ur] &&\centerdot \ar@{.}[rr] \ar[ur] &&\centerdot
}
$$
where the first and the last column are identical. We denote by ``$\circ$" in the AR-quiver the indecomposable objects belong to a subcategory and by ``$\cdot$'' the indecomposable objects do not belong to it. Through direct calculation, we know that 
$$\xymatrix@C=0.4cm@R0.4cm{
&\circ \ar[dr] &&\circ \ar[dr] &&\circ \ar[dr] &&\circ \ar[dr] &&\circ \ar[dr] &&\circ \\
{\C=}  &&\cdot \ar[dr] \ar@{.}[l] \ar@{.}[rr] \ar[ur] &&\cdot \ar@{.}[rr] \ar[dr] \ar[ur] &&\cdot \ar[dr] \ar[ur] \ar@{.}[rr] &&\circ \ar@{.}[rr] \ar[dr] \ar[ur] &&\cdot \ar[dr] \ar[ur] \ar@{.}[r] &\\
&\cdot \ar@{.}[rr] \ar[ur] &&\circ \ar@{.}[rr] \ar[ur] &&\cdot \ar@{.}[rr] \ar[ur] &&\cdot \ar@{.}[rr] \ar[ur] &&\cdot \ar@{.}[rr] \ar[ur] &&\cdot
}
$$
is pre-cluster tilting.
\end{exm}

%The following example shows that pre-cluster tilting subcategories is not necessarily cluster tilting.
%
%\begin{exm}
%{\color{red} It is pre-cluster tilting subcategory, but not cluster-tilting.}
%\end{exm}

Cotorsion pair is a generalization structure of cluster tilting subcategory on both triangulated and exact categories \cite{N1,DL}, now it is also defined on the extriangulated categories \cite{NP}. We recall its definition, which will be used frequently.
\begin{defn}\cite[Definition 2.1]{NP}
Let $\U$ and $\V$ be two subcategories of $\B$ which are closed under direct summands. We call $(\U,\V)$ a \emph{cotorsion pair} if it satisfies the following conditions:
\begin{itemize}
\item[(a)] $\EE(\U,\V)=0$.

\item[(b)] For any object $B\in \B$, there exist two $\EE$-triangles
\begin{align*}
V_B\rightarrow U_B\rightarrow B\overset{\delta}{\dashrightarrow},\quad
B\rightarrow V^B\rightarrow U^B\overset{\sigma}{\dashrightarrow}
\end{align*}
satisfying $U_B,U^B\in \U$ and $V_B,V^B\in \V$.
\end{itemize}
\end{defn}
By definition of a cotorsion pair, we can immediately conclude:
\begin{prop}\label{prop0}
Let $\U$ be a subcategory of $\B$. Then $(\U,\U)$ is a cotorsion pair
if and only if $\U$ is a cluster tilting subcategory.
\end{prop}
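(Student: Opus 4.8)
The plan is to prove the two implications separately; in each the main tool is the long exact sequence in $\Hom$ and $\EE$ attached to an $\EE$-triangle, together with the fact that an $\EE$-triangle $A\to B\to C\overset{\delta}{\dashrightarrow}$ splits if and only if $\delta=0$.

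\textbf{Cotorsion pair $\Longrightarrow$ cluster tilting.} Assume $(\U,\U)$ is a cotorsion pair. Condition (a) is rigidity, $\EE(\U,\U)=0$, which also gives $\U\subseteq\U^{\bot_1}$ and $\U\subseteq{^{\bot_1}}\U$. For the reverse inclusions, given $X\in\U^{\bot_1}$ take the $\EE$-triangle $X\to V^X\to U^X\overset{\sigma}{\dashrightarrow}$ from (b) with $V^X,U^X\in\U$; since $\sigma\in\EE(U^X,X)=0$ it splits, so $X$ is a summand of $V^X\in\U$, hence $X\in\U$. Dually ${^{\bot_1}}\U\subseteq\U$ using the other $\EE$-triangle of (b). Thus $\U=\U^{\bot_1}={^{\bot_1}}\U$, which in particular yields the equality $\U^{\bot_1}={^{\bot_1}}\U$; being of the form $\U^{\bot_1}$, the subcategory $\U$ is closed under finite direct sums, and closure under summands is built into the definition of a cotorsion pair. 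Finally, for $B\in\B$ the deflation $U_B\to B$ in the $\EE$-triangle $V_B\to U_B\to B\overset{\delta}{\dashrightarrow}$ of (b) is a right $\U$-approximation: for $U\in\U$ and $f\colon U\to B$ the obstruction $f^\ast\delta$ to factoring $f$ through $U_B\to B$ lies in $\EE(U,V_B)=0$. So $\U$ is strongly contravariantly finite, and dually strongly covariantly finite; hence $\U$ is cluster tilting.

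\textbf{Cluster tilting $\Longrightarrow$ cotorsion pair.} Condition (a) is again rigidity. For (b), fix $B\in\B$. By strong contravariant finiteness choose a right $\U$-approximation $U_B\to B$ that is a deflation and extend it to an $\EE$-triangle $V_B\to U_B\to B\overset{\delta}{\dashrightarrow}$. Applying $\Hom(U,-)$ for $U\in\U$ produces the exact sequence $\Hom(U,U_B)\to\Hom(U,B)\to\EE(U,V_B)\to\EE(U,U_B)=0$ in which the first map is surjective (approximation property), forcing $\EE(U,V_B)=0$; so $V_B\in\U^{\bot_1}=\U$. Dually, a left $\U$-approximation $B\to V^B$ that is an inflation extends to an $\EE$-triangle $B\to V^B\to U^B\overset{\sigma}{\dashrightarrow}$, and applying $\Hom(-,U)$ shows $\EE(U^B,U)=0$ for all $U\in\U$, i.e.\ $U^B\in{^{\bot_1}}\U=\U$. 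These two $\EE$-triangles are exactly those required by (b), so $(\U,\U)$ is a cotorsion pair.

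There is no genuine obstacle here: the statement is essentially a translation between the definition of a cotorsion pair and that of a cluster tilting subcategory via the $\Hom$ and $\EE$ exact sequences. The point deserving a little care is the passage between ``right/left $\U$-approximation which is a deflation/inflation'' and the defining $\EE$-triangles of the cotorsion pair; this is precisely where rigidity $\EE(\U,\U)=0$ and the equalities $\U=\U^{\bot_1}={^{\bot_1}}\U$ are invoked. No use is made of the Krull-Schmidt or $k$-linear hypotheses on $\B$ for this proposition.
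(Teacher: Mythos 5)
Your proof is correct. The paper offers no argument at all for this proposition (it is stated as an immediate consequence of the definitions), and your two implications are precisely the routine verification being left implicit: the splitting of the defining $\EE$-triangles via $\U=\U^{\bot_1}={^{\bot_1}}\U$ in one direction, and the long exact $\Hom$/$\EE$ sequence applied to the approximation conflations in the other.
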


For any subcategory $\U$, we call $\U\cap {^{\bot_1}}\U=$ the coheart of $\U$. We say $\U$ is maximal if $\U$ is maximal among those with the same coheart.
\vspace{1mm}

Now we assume $\B$ has enough projectives and enough injectives. We denote by $\mathcal P$ the subcategory of projective objects and by $\mathcal I$ the subcategory of injective objects. Under this assumption, if we have a pre-cluster tilting subcategory $\C$, then we can get two cotorsion pairs $(\C,\C^{\bot_1})$ and $(\C^{\bot_1},\C)$ (see Lemma \ref{CP}).

Our first main result is the following.

\begin{thm}\label{main1}
The maps
$$\U \mapsto \C:=\U\cap {^{\bot_1}}\U \text{ and } \C \mapsto \U:={^{\bot_1}}\C $$
give mutually inverse bijections between:
\begin{itemize}
\item Maximal subcategories $\U$ which admits two cotorsion pairs $(\U,\V),(\V,\U)$.
\item Pre-cluster tilting subcategories $\C$.
\end{itemize}
\end{thm}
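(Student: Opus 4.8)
The plan is to check that the two maps are well defined and mutually inverse; the whole argument rests on a single structural observation exploiting maximality, together with the elementary fact that an $\EE$-triangle whose connecting morphism lies in a vanishing $\EE$-group splits. From this one gets, for any cotorsion pair $(\X,\Y)$, the identities $\Y=\X^{\bot_1}$ and $\X={}^{\bot_1}\Y$: the inclusions $\X\subseteq{}^{\bot_1}\Y$ and $\Y\subseteq\X^{\bot_1}$ are the orthogonality axiom, while conversely an object of $\X^{\bot_1}$ (resp. of ${}^{\bot_1}\Y$) is a direct summand of the middle term of one of the two defining $\EE$-triangles, which splits because its connecting morphism lies in $\EE(\X,\X^{\bot_1})=0$ (resp. $\EE({}^{\bot_1}\Y,\Y)=0$). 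This identity is used repeatedly below.

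\emph{From a pre-cluster tilting $\C$ to a maximal $\U$.} Put $\U:={}^{\bot_1}\C=\C^{\bot_1}$. By Lemma \ref{CP} the pairs $(\C,\U)$ and $(\U,\C)$ are cotorsion pairs, so $\U$ admits the two required cotorsion pairs $(\U,\V),(\V,\U)$ with $\V=\C$. From $(\C,\U)$ and the identity above, ${}^{\bot_1}\U=\C$; since $\C$ is rigid, $\C\subseteq\C^{\bot_1}=\U$, so the coheart of $\U$ is $\U\cap{}^{\bot_1}\U=\U\cap\C=\C$. This already shows $\C\mapsto\U\mapsto\U\cap{}^{\bot_1}\U$ is the identity. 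For maximality, if a subcategory $W\supseteq\U$ has coheart $\C$, then $\C=W\cap{}^{\bot_1}W\subseteq{}^{\bot_1}W$, hence $\EE(\C,W)=0$, i.e. $W\subseteq\C^{\bot_1}=\U$, so $W=\U$.

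\emph{From a maximal $\U$ back to a pre-cluster tilting $\C$.} Let $\U$ be maximal admitting cotorsion pairs $(\U,\V),(\V,\U)$, and set $\C:=\U\cap{}^{\bot_1}\U$. Applying the identity to $(\V,\U)$ and to $(\U,\V)$ gives ${}^{\bot_1}\U=\V$ and ${}^{\bot_1}\V=\U$, so $\C=\U\cap\V$. The crucial claim is $\V\subseteq\U$. Set $W:=\add(\U\cup\V)$; then ${}^{\bot_1}W={}^{\bot_1}\U\cap{}^{\bot_1}\V=\V\cap\U=\C$, so the coheart of $W$ is $W\cap{}^{\bot_1}W=W\cap\C=\C$ (using $\C\subseteq\U\subseteq W$). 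As $W\supseteq\U$ has the same coheart $\C$, maximality forces $W=\U$, whence $\V\subseteq\U$ and $\C=\U\cap\V=\V$. Now $(\C,\U)$ and $(\U,\C)$ are cotorsion pairs, and one reads off that $\C$ is pre-cluster tilting: $\C={}^{\bot_1}\U$ is closed under direct sums and summands; $\C$ is rigid since $\EE(\C,\C)\subseteq\EE(\C,\U)=0$; for $B\in\B$ the first $\EE$-triangle of $(\C,\U)$, say $U_B\to C_B\to B\overset{\delta}{\dashrightarrow}$ with $C_B\in\C$, $U_B\in\U$, has the deflation $C_B\to B$ as a right $\C$-approximation (the connecting map sends $\Hom(C,B)$ into $\EE(C,U_B)=0$ for $C\in\C$), so $\C$ is strongly contravariantly finite, and dually $\C$ is strongly covariantly finite via the second $\EE$-triangle of $(\U,\C)$; finally the identity applied to $(\C,\U)$ and $(\U,\C)$ yields $\C^{\bot_1}=\U={}^{\bot_1}\C$. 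In particular ${}^{\bot_1}\C=\U$, so $\U\mapsto\C\mapsto{}^{\bot_1}\C$ is the identity, and the stated bijection follows.

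The only non-formal step is the claim $\V\subseteq\U$: without maximality the assignment $\U\mapsto\U\cap{}^{\bot_1}\U$ is far from injective, and it is precisely the construction $W=\add(\U\cup\V)$, whose coheart is computed via ${}^{\bot_1}W=\C$, that under maximality collapses $\V$ into $\U$ and identifies $\V$ with $\C$ itself. Once that is done, every remaining point reduces to bookkeeping with the two cotorsion pairs and the splitting fact, so I expect no further obstacle.
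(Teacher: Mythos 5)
Your argument is correct under the literal reading of the paper's definition of ``maximal'', and it takes a genuinely different and much more elementary route. The paper does not touch maximality until its last two lines: the bulk of its proof is a direct verification that $(\C,\K)$ and $(\K,\C)$ are cotorsion pairs, where $\K=\U+\V$ is essentially your $W=\add(\U\cup\V)$, and this is done by a nontrivial diagram chase using the cohomological functor $H$ and the heart $\h/\C$ (the key point being that the constructed middle term $K$ satisfies $H(K)=0$, hence lies in $\K$); only then does maximality force $\U=\K$. You instead obtain $W=\U$ purely formally from maximality together with the computation ${}^{\bot_1}W={}^{\bot_1}\U\cap{}^{\bot_1}\V=\V\cap\U=\C$, which collapses $\V$ onto $\C$ and reduces everything to bookkeeping with the cotorsion pairs $(\C,\U),(\U,\C)$. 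All the individual steps check out: the splitting identity $\Y=\X^{\bot_1}$, $\X={}^{\bot_1}\Y$ for a cotorsion pair, the approximation property of the cotorsion-pair deflations giving strong functorial finiteness, and the observation that $\C^{\bot_1}={}^{\bot_1}\C$ absorbs every subcategory with coheart $\C$ (which in fact shows the maximal $\U$ is unique). What your route buys is a short proof with no heart machinery at all.

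Two caveats. First, your application of maximality to $W$ is legitimate only if ``maximal'' means maximal among \emph{all} subcategories with the same coheart, which is what the paper's definition literally says; if the intended meaning is maximal only within the class of subcategories admitting two cotorsion pairs, then before invoking maximality you must show that $W$ itself admits cotorsion pairs $(\C,W),(W,\C)$ --- and that is exactly the hard step of the paper's proof that you have bypassed. Second, the paper's proof establishes the stronger fact that $(\C,\K),(\K,\C)$ are cotorsion pairs for an \emph{arbitrary} (not necessarily maximal) pair of cotorsion pairs $(\U,\V),(\V,\U)$, and this byproduct is quoted at the start of the proof of Theorem \ref{main}; your argument, while sufficient for the bijection as stated under the first reading, does not recover that later-used statement.
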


In order to introduce the second main result, we need the following definition.

\begin{defn}
Let $\B_j,j\in \J$
be extriangulated subcategories of $\B$. We call that $\B$ is  a direct sum of
extriangulated subcategories $\B_j,j\in\J$ if it satisfies the following conditions:
\begin{itemize}
\item Any object $M\in\B$ is a direct sum of finitely many objects $M_j\in\B_j$;
\item $\Hom(\B_i,\B_j)=0$, for any $i\neq j$.
\end{itemize}
In this case, we write $\B=\bigoplus_{j\in\J}\B_j$.
An extriangulated category is called connected if it can not be written as direct sum of two non-zero extriangulated subcategories.
\end{defn}

By \cite[Proposition 3.30]{NP}, $\B/(\mathcal P\cap \mathcal I)$ is still an extriangulated category. We will show the second main result of this article.

\begin{thm}\label{main}
Let $\B/(\mathcal P\cap \mathcal I)$ be connected and $(\U,\V),(\V,\U)$ be cotorsion pairs on $\B$. Let $\C=\U\cap \V$, if $\C\supset \mathcal P\cap \mathcal I$, then the following statements are equivalent.
\begin{itemize}
\item[(a)] $\C$ is cluster tilting;
\item[(b)] If $(\s,\R),(\R,\s)$ are cotorsion pairs such that $\s\cap \R=\C$, then $\s=\C$;
\item[(c)] $\B/\C$ is abelian.
\item[(d)]  $\B/\U$ and $\B/\V$ are abelian.
\end{itemize}
\end{thm}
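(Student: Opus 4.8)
The plan is to prove the cycle of implications $(a)\Rightarrow(b)\Rightarrow(c)\Rightarrow(a)$ and then $(c)\Leftrightarrow(d)$, exploiting the symmetry between $\U$ and $\V$ afforded by the two cotorsion pairs $(\U,\V),(\V,\U)$. First I would record the structural consequences of having both cotorsion pairs: for every $B\in\B$ we get $\EE$-triangles $V_B\to U_B\to B\dashrightarrow$ and $B\to V^B\to U^B\dashrightarrow$ with $U_B,U^B\in\U$, $V_B,V^B\in\V$, and symmetrically with the roles swapped; combined with $\EE(\U,\V)=0$ these give the usual adjunction-type calculus in $\B/\C$, and in particular (via Lemma \ref{CP} and Theorem \ref{main1}) identify $\C=\U\cap\V$ with the coheart $\U\cap{}^{\bot_1}\U=\V\cap{}^{\bot_1}\V$. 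The hypothesis $\C\supseteq\mathcal P\cap\mathcal I$ lets us pass to $\B/(\mathcal P\cap\mathcal I)$ without changing any of the Hom-quotients $\B/\C$, $\B/\U$, $\B/\V$, so I may assume $\mathcal P\cap\mathcal I=0$ and use connectedness of $\B$ freely.

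For $(a)\Rightarrow(b)$: if $\C$ is cluster tilting then $\C=\C^{\bot_1}={}^{\bot_1}\C$, so $(\C,\C)$ is a cotorsion pair by Proposition \ref{prop0}. Given cotorsion pairs $(\s,\R),(\R,\s)$ with $\s\cap\R=\C$, I would show $\R\subseteq\C^{\bot_1}=\C$ and $\s\subseteq{}^{\bot_1}\C=\C$ using $\EE(\s,\R)=\EE(\R,\s)=0$ together with the approximation triangles, which forces $\s=\R=\C$. For $(b)\Rightarrow(a)$, equivalently $\neg(a)\Rightarrow\neg(b)$: if $\C$ is not cluster tilting then $\C\subsetneq{}^{\bot_1}\C=\C^{\bot_1}$, and by Lemma \ref{CP} the pairs $(\C,\C^{\bot_1})$ and $(\C^{\bot_1},\C)$ are cotorsion pairs with $\C\cap\C^{\bot_1}=\C$ (since $\C$ is rigid) but $\C^{\bot_1}\neq\C$, contradicting (b) with $\s:=\C^{\bot_1}$, $\R:=\C$.

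The heart of the argument is $(c)\Rightarrow(a)$. Assume $\B/\C$ is abelian; I want $\C=\C^{\bot_1}$. Take any $X\in\C^{\bot_1}$; using the cotorsion pair $(\V,\U)$ choose an $\EE$-triangle $X\xrightarrow{f} V'\to U'\dashrightarrow$ with $V'\in\V$, $U'\in\U$, and another $U''\to V''\xrightarrow{g}X\dashrightarrow$. In the abelian category $\B/\C$ these become part of short exact-type sequences, and connectedness of $\B/(\mathcal P\cap\mathcal I)$ is used exactly as in \cite{B,KZ} to rule out a nontrivial direct-sum decomposition that would otherwise appear. The key step is to show that the image/kernel of the relevant morphism in $\B/\C$, lifted back to $\B$, lies in $\C$: concretely, I expect to show that an object $X\in\C^{\bot_1}$ with $\underline{X}\neq 0$ in $\B/\C$ produces a non-trivial decomposition $\B/(\mathcal P\cap\mathcal I)=\B_1\oplus\B_2$ by analyzing idempotents built from the approximation triangles, contradicting connectedness; hence $\C^{\bot_1}\subseteq\C$, and the reverse inclusion is rigidity. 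This is the step I expect to be the main obstacle, since it requires transporting kernels and cokernels computed in the abelian quotient back to $\EE$-triangles in $\B$ — here the "suitable" hypotheses on $\B$ (enough projectives and injectives, WIC-type conditions from \cite{NP}) and the explicit description of $\B/\C$ from \cite{LN} will do the work.

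Finally $(a)\Rightarrow(c)$ is immediate from \cite[Theorem 3.2]{LN} once $\C$ is cluster tilting. For $(c)\Leftrightarrow(d)$: the pairs $(\U,\V),(\V,\U)$ give, by Theorem \ref{main1}, $\C=\U\cap{}^{\bot_1}\U$, and one checks that $\B/\U$ abelian forces $\U$ to coincide with its coheart $\C$ (again via the maximality/connectedness argument), so $\B/\U=\B/\C$; symmetrically $\B/\V=\B/\C$. Thus $(d)\Leftrightarrow(c)$, closing all the equivalences.
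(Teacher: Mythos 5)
Your handling of the easy implications is correct and essentially the paper's: (a)$\Rightarrow$(b) and (b)$\Rightarrow$(a) follow from Theorem \ref{main1} together with the cotorsion pairs $(\C,\K),(\K,\C)$ (where $\K=\U+\V$; your use of $\C^{\bot_1}$ in place of $\K$ is harmless since $(\C,\K)$ and $(\C,\C^{\bot_1})$ being cotorsion pairs with the same left half forces $\K=\C^{\bot_1}$), and (a)$\Rightarrow$(c),(d) is just \cite[Theorem 3.2]{LN}. The genuine gap is in (c)$\Rightarrow$(a) and (d)$\Rightarrow$(a), which is where the entire content of the theorem lies: you state that an object $X\in\C^{\bot_1}$ with $\overline{X}\neq 0$ in $\B/\C$ should ``produce a non-trivial decomposition of $\B/(\mathcal P\cap\mathcal I)$ by analyzing idempotents built from the approximation triangles,'' but this is exactly the assertion that requires proof, and you yourself flag it as the main obstacle without resolving it. No mechanism is given for why such a decomposition exists, why the two summands are extension-closed, why $\Hom$ vanishes between them in $\B/(\mathcal P\cap\mathcal I)$, or why both are nonzero. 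As written, the hard direction is a restatement of the goal rather than an argument.

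For comparison, the paper spends Lemmas \ref{serre}--\ref{mainprop} on precisely this point. From $\B/\V$ abelian it deduces that $\U/\C$ is abelian, hence (being also triangulated) semisimple, giving Corollary \ref{zero-iso} (morphisms between indecomposables of $\U$ are isomorphisms or factor through $\C$). The key technical step is Lemma \ref{mainlem}: every indecomposable $K\in\widetilde{\U}$ (no direct summand in $\C$) admits $\EE$-triangles $K'\to P\to K$ and $K\to I\to K''$ with $P,I\in\mathcal P\cap\mathcal I$ and $K',K''\in\widetilde{\U}$ indecomposable; its proof (showing the relevant middle term $B$ lies first in $\C$ and then in $\mathcal P$) is long and delicate and uses both Lemma \ref{serre} and Lemma \ref{app}. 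Combining the $\U$- and $\V$-versions (note: the paper needs \emph{both} $\B/\U$ and $\B/\V$ abelian to cover all of $\widetilde{\K}$, whereas you claim $\B/\U$ abelian alone forces $\U=\C$, which is stronger than what is established), Lemmas \ref{maincor1} and \ref{maincor2} yield the decomposition $\B/(\mathcal P\cap\mathcal I)=\widehat{\K}/(\mathcal P\cap\mathcal I)\oplus\h_{\C}/(\mathcal P\cap\mathcal I)$ into extriangulated subcategories, and Proposition \ref{mainprop} shows $\h_{\C}/(\mathcal P\cap\mathcal I)\neq 0$; connectedness then forces $\widetilde{\K}=0$, i.e.\ $\K=\C$. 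The case (c)$\Rightarrow$(a) is reduced to this by running the same argument with the pairs $(\C,\K),(\K,\C)$. Until you supply an argument at the level of Lemma \ref{mainlem} (or an alternative route to the decomposition), the proposal does not prove the theorem.
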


Theorem \ref{main1} and Theorem \ref{main} is a generalization of related results of Koenig-Zhu \cite[Theorem 3.3]{KZ}, Demonet-Liu \cite[Theorem 3.2]{DL} and Beligiannis \cite[Theorem 7.3]{B}.

This article is organized as follows. In Section 2, we review some elementary definitions and facts of extriangulated category
that we need. In Section 3, we prove our first and second main result.

\section{Preliminaries}
Let us briefly recall the definition and basic properties of extriangulated categories from \cite{NP}. Throughout this paper, we assume that $\B$ is an additive category.

\begin{defn}
Suppose that $\B$ is equipped with an additive bifunctor $\mathbb{E}\colon\B^\mathrm{op}\times\B\to\Ab$, where $\Ab$ is the category of abelian groups. For any pair of objects $A,C\in\B$, an element $\delta\in\mathbb{E}(C,A)$ is called an {\it $\mathbb{E}$-extension}. Thus formally, an $\EE$-extension is a triplet $(A,\delta,C)$.
For any $A,C\in\C$, the zero element $0\in\EE(C,A)$ is called the \emph{spilt $\EE$-extension}.

Let $\delta\in\mathbb{E}(C,A)$ be any $\mathbb{E}$-extension. By the functoriality, for any $a\in\B(A,A^{\prime})$ and $c\in\B(C^{\prime},C)$, we have $\mathbb{E}$-extensions
\[ \mathbb{E}(C,a)(\delta)\in\mathbb{E}(C,A^{\prime})\ \ \text{and}\ \ \mathbb{E}(c,A)(\delta)\in\mathbb{E}(C^{\prime},A). \]
We abbreviately denote them by $a_{\ast}\delta$ and $c^{\ast}\delta$.
In this terminology, we have
\[ \mathbb{E}(c,a)(\delta)=c^{\ast} a_{\ast}\delta=a_{\ast} c^{\ast}\delta \]
in $\mathbb{E}(C^{\prime},A^{\prime})$.

\end{defn}

\begin{defn}
Let $\delta\in\mathbb{E}(C,A)$ and $\delta^{\prime}\in\mathbb{E}(C^{\prime},A^{\prime})$ be two pair of $\mathbb{E}$-extensions. A {\it morphism} $(a,c)\colon\delta\to\delta^{\prime}$ of $\mathbb{E}$-extensions is a pair of morphisms $a\in\B(A,A^{\prime})$ and $c\in\B(C,C^{\prime})$ in $\B$, satisfying the equality
\[ a_{\ast}\delta=c^{\ast}\delta^{\prime}. \]
We simply denote it as $(a,c)\colon\delta\to\delta^{\prime}$.
\end{defn}

\begin{defn}
Let $\delta=(A,\delta,C)$ and $\delta^{\prime}=(A^{\prime},\delta^{\prime},C^{\prime})$ be any pair of $\mathbb{E}$-extensions. Let
\[ C\xrightarrow{~\iota_C~}C\oplus C^{\prime}\xleftarrow{~\iota_{C^{\prime}}~}C^{\prime} \]
and
\[ A\xrightarrow{~p_A~}A\oplus A^{\prime}\xleftarrow{~p_{A^{\prime}}~}A^{\prime} \]
be coproduct and product in $\B$, respectively. Remark that, by the additivity of $\mathbb{E}$, we have a natural isomorphism
\[ \mathbb{E}(C\oplus C^{\prime},A\oplus A^{\prime})\simeq \mathbb{E}(C,A)\oplus\mathbb{E}(C,A^{\prime})\oplus\mathbb{E}(C^{\prime},A)\oplus\mathbb{E}(C^{\prime},A^{\prime}). \]

Let $\delta\oplus\delta^{\prime}\in\mathbb{E}(C\oplus C^{\prime},A\oplus A^{\prime})$ be the element corresponding to $(\delta,0,0,\delta^{\prime})$ through this isomorphism. This is the unique element which satisfies
$$
\mathbb{E}(\iota_C,p_A)(\delta\oplus\delta^{\prime})=\delta,\ \mathbb{E}(\iota_C,p_{A^{\prime}})(\delta\oplus\delta^{\prime})=0,\
\mathbb{E}(\iota_{C^{\prime}},p_A)(\delta\oplus\delta^{\prime})=0,\ \mathbb{E}(\iota_{C^{\prime}},p_{A^{\prime}})(\delta\oplus\delta^{\prime})=\delta^{\prime}.
$$
\end{defn}

\begin{defn}
Let $A,C\in\B$ be any pair of objects. Two sequences of morphisms in $\B$
\[ A\overset{x}{\longrightarrow}B\overset{y}{\longrightarrow}C\ \ \text{and}\ \ A\overset{x^{\prime}}{\longrightarrow}B^{\prime}\overset{y^{\prime}}{\longrightarrow}C \]
are said to be {\it equivalent} if there exists an isomorphism $b\in\B(B,B^{\prime})$ which makes the following diagram commutative.
\[
\xy
(-16,0)*+{A}="0";
(3,0)*+{}="1";
(0,8)*+{B}="2";
(0,-8)*+{B^{\prime}}="4";
(-3,0)*+{}="5";
(16,0)*+{C}="6";
{\ar^{x} "0";"2"};
{\ar^{y} "2";"6"};
{\ar_{x^{\prime}} "0";"4"};
{\ar_{y^{\prime}} "4";"6"};
{\ar^{b}_{\simeq} "2";"4"};
{\ar@{}|{} "0";"1"};
{\ar@{}|{} "5";"6"};
\endxy
\]

We denote the equivalence class of $A\overset{x}{\longrightarrow}B\overset{y}{\longrightarrow}C$ by $[A\overset{x}{\longrightarrow}B\overset{y}{\longrightarrow}C]$.
\end{defn}

\begin{defn}
$\ \ $
\begin{enumerate}
\item[(1)] For any $A,C\in\B$, we denote as
\[ 0=[A\overset{\Big[\raise1ex\hbox{\leavevmode\vtop{\baselineskip-8ex \lineskip1ex \ialign{#\crcr{$\scriptstyle{1}$}\crcr{$\scriptstyle{0}$}\crcr}}}\Big]}{\longrightarrow}A\oplus C\overset{[0\ 1]}{\longrightarrow}C]. \]

\item[(2)] For any $[A\overset{x}{\longrightarrow}B\overset{y}{\longrightarrow}C]$ and $[A^{\prime}\overset{x^{\prime}}{\longrightarrow}B^{\prime}\overset{y^{\prime}}{\longrightarrow}C^{\prime}]$, we denote as
\[ [A\overset{x}{\longrightarrow}B\overset{y}{\longrightarrow}C]\oplus [A^{\prime}\overset{x^{\prime}}{\longrightarrow}B^{\prime}\overset{y^{\prime}}{\longrightarrow}C^{\prime}]=[A\oplus A^{\prime}\overset{x\oplus x^{\prime}}{\longrightarrow}B\oplus B^{\prime}\overset{y\oplus y^{\prime}}{\longrightarrow}C\oplus C^{\prime}]. \]
\end{enumerate}
\end{defn}

\begin{defn}
Let $\mathfrak{s}$ be a correspondence which associates an equivalence class $\mathfrak{s}(\delta)=[A\overset{x}{\longrightarrow}B\overset{y}{\longrightarrow}C]$ to any $\mathbb{E}$-extension $\delta\in\mathbb{E}(C,A)$. This $\mathfrak{s}$ is called a {\it realization} of $\mathbb{E}$, if it satisfies the following condition $(\star)$. In this case, we say that the sequence $A\overset{x}{\longrightarrow}B\overset{y}{\longrightarrow}C$ {\it realizes} $\delta$, whenever it satisfies $\mathfrak{s}(\delta)=[A\overset{x}{\longrightarrow}B\overset{y}{\longrightarrow}C]$.
\begin{itemize}
\item[$(\star)$] Let $\delta\in\mathbb{E}(C,A)$ and $\delta^{\prime}\in\mathbb{E}(C^{\prime},A^{\prime})$ be any pair of $\mathbb{E}$-extensions, with
\[\mathfrak{s}(\delta)=[A\overset{x}{\longrightarrow}B\overset{y}{\longrightarrow}C]\text{ and } \mathfrak{s}(\delta^{\prime})=[A^{\prime}\overset{x^{\prime}}{\longrightarrow}B^{\prime}\overset{y^{\prime}}{\longrightarrow}C^{\prime}].\]
Then, for any morphism $(a,c)\colon\delta\to\delta^{\prime}$, there exists $b\in\B(B,B^{\prime})$ which makes the following diagram commutative.
$$
\xy
(-12,6)*+{A}="0";
(0,6)*+{B}="2";
(12,6)*+{C}="4";
(-12,-6)*+{A^{\prime}}="10";
(0,-6)*+{B^{\prime}}="12";
(12,-6)*+{C^{\prime}}="14";
{\ar^{x} "0";"2"};
{\ar^{y} "2";"4"};
{\ar_{a} "0";"10"};
{\ar^{b} "2";"12"};
{\ar^{c} "4";"14"};
{\ar^{x^{\prime}} "10";"12"};
{\ar^{y^{\prime}} "12";"14"};
{\ar@{}|{} "0";"12"};
{\ar@{}|{} "2";"14"};
\endxy
$$
\end{itemize}
In the above situation, we say that the triplet $(a,b,c)$ {\it realizes} $(a,c)$.
\end{defn}

\begin{defn}
Let $\B,\mathbb{E}$ be as above. A realization of $\mathbb{E}$ is said to be {\it additive}, if it satisfies the following conditions.
\begin{itemize}
\item[{\rm (i)}] For any $A,C\in\B$, the split $\mathbb{E}$-extension $0\in\mathbb{E}(C,A)$ satisfies
\[ \mathfrak{s}(0)=0. \]
\item[{\rm (ii)}] For any pair of $\mathbb{E}$-extensions $\delta\in\mathbb{E}(C,A)$ and $\delta^{\prime}\in\mathbb{E}(C^{\prime},A^{\prime})$, we have
\[ \mathfrak{s}(\delta\oplus\delta^{\prime})=\mathfrak{s}(\delta)\oplus\mathfrak{s}(\delta^{\prime}). \]
\end{itemize}
\end{defn}

\begin{defn}\cite[Definition 2.12]{NP}
A triplet $(\B,\mathbb{E},\mathfrak{s})$ is called an {\it extriangulated category} if it satisfies the following conditions.
\begin{itemize}
\item[{\rm (ET1)}] $\mathbb{E}\colon\B^{\mathrm{op}}\times\B\to\Ab$ is an additive bifunctor.
\item[{\rm (ET2)}] $\mathfrak{s}$ is an additive realization of $\mathbb{E}$.
\item[{\rm (ET3)}] Let $\delta\in\mathbb{E}(C,A)$ and $\delta^{\prime}\in\mathbb{E}(C^{\prime},A^{\prime})$ be any pair of $\mathbb{E}$-extensions, realized as
\[ \mathfrak{s}(\delta)=[A\overset{x}{\longrightarrow}B\overset{y}{\longrightarrow}C],\ \ \mathfrak{s}(\delta^{\prime})=[A^{\prime}\overset{x^{\prime}}{\longrightarrow}B^{\prime}\overset{y^{\prime}}{\longrightarrow}C^{\prime}]. \]
For any commutative square
$$
\xy
(-12,6)*+{A}="0";
(0,6)*+{B}="2";
(12,6)*+{C}="4";
(-12,-6)*+{A^{\prime}}="10";
(0,-6)*+{B^{\prime}}="12";
(12,-6)*+{C^{\prime}}="14";
{\ar^{x} "0";"2"};
{\ar^{y} "2";"4"};
{\ar_{a} "0";"10"};
{\ar^{b} "2";"12"};
{\ar^{x^{\prime}} "10";"12"};
{\ar^{y^{\prime}} "12";"14"};
{\ar@{}|{} "0";"12"};
\endxy
$$
in $\B$, there exists a morphism $(a,c)\colon\delta\to\delta^{\prime}$ satisfying $cy=y^{\prime}b$.
\item[{\rm (ET3)$^{\mathrm{op}}$}] Dual of {\rm (ET3)}.
\item[{\rm (ET4)}] Let $\delta\in\mathbb{E}(D,A)$ and $\delta^{\prime}\in\mathbb{E}(F,B)$ be $\mathbb{E}$-extensions realized by
\[ A\overset{f}{\longrightarrow}B\overset{f^{\prime}}{\longrightarrow}D\ \ \text{and}\ \ B\overset{g}{\longrightarrow}C\overset{g^{\prime}}{\longrightarrow}F \]
respectively. Then there exist an object $E\in\B$, a commutative diagram
$$
\xy
(-21,7)*+{A}="0";
(-7,7)*+{B}="2";
(7,7)*+{D}="4";
(-21,-7)*+{A}="10";
(-7,-7)*+{C}="12";
(7,-7)*+{E}="14";
(-7,-21)*+{F}="22";
(7,-21)*+{F}="24";
{\ar^{f} "0";"2"};
{\ar^{f^{\prime}} "2";"4"};
{\ar@{=} "0";"10"};
{\ar_{g} "2";"12"};
{\ar^{d} "4";"14"};
{\ar^{h} "10";"12"};
{\ar^{h^{\prime}} "12";"14"};
{\ar_{g^{\prime}} "12";"22"};
{\ar^{e} "14";"24"};
{\ar@{=} "22";"24"};
{\ar@{}|{} "0";"12"};
{\ar@{}|{} "2";"14"};
{\ar@{}|{} "12";"24"};
\endxy
$$
in $\B$, and an $\mathbb{E}$-extension $\delta^{\prime\prime}\in\mathbb{E}(E,A)$ realized by $A\overset{h}{\longrightarrow}C\overset{h^{\prime}}{\longrightarrow}E$, which satisfy the following compatibilities.
\begin{itemize}
\item[{\rm (i)}] $D\overset{d}{\longrightarrow}E\overset{e}{\longrightarrow}F$ realizes $f^{\prime}_{\ast}\delta^{\prime}$,
\item[{\rm (ii)}] $d^{\ast}\delta^{\prime\prime}=\delta$,

\item[{\rm (iii)}] $f_{\ast}\delta^{\prime\prime}=e^{\ast}\delta^{\prime}$.
\end{itemize}

\item[{\rm (ET4)$^{\mathrm{op}}$}] Dual of {\rm (ET4)}.
\end{itemize}
\end{defn}

\begin{rem}
Note that both exact categories and triangulated categories are extriangulated categories, see \cite[Example 2.13]{NP} and extension-closed subcategories of extriangulated categories are
again extriangulated, see \cite[Remark 2.18]{NP} . Moreover, there exist extriangulated categories which
are neither exact categories nor triangulated categories, see \cite[Proposition 3.30]{NP} and \cite[Example 4.14]{ZZ}.
\end{rem}

We will use the following terminology.
\begin{defn}{\cite{NP}}
Let $(\B,\EE,\mathfrak{s})$ be an extriangulated category.
\begin{itemize}
\item[(1)] A sequence $A\xrightarrow{~x~}B\xrightarrow{~y~}C$ is called a {\it conflation} if it realizes some $\EE$-extension $\del\in\EE(C,A)$. In this case, $x$ is called an {\it inflation} and $y$ is called a {\it deflation}.

\item[(2)] If a conflation  $A\xrightarrow{~x~}B\xrightarrow{~y~}C$ realizes $\delta\in\mathbb{E}(C,A)$, we call the pair $( A\xrightarrow{~x~}B\xrightarrow{~y~}C,\delta)$ an {\it $\EE$-triangle}, and write it in the following way.
$$A\overset{x}{\longrightarrow}B\overset{y}{\longrightarrow}C\overset{\delta}{\dashrightarrow}$$
We usually do not write this $``\delta"$ if it is not used in the argument.
\item[(3)] Let $A\overset{x}{\longrightarrow}B\overset{y}{\longrightarrow}C\overset{\delta}{\dashrightarrow}$ and $A^{\prime}\overset{x^{\prime}}{\longrightarrow}B^{\prime}\overset{y^{\prime}}{\longrightarrow}C^{\prime}\overset{\delta^{\prime}}{\dashrightarrow}$ be any pair of $\EE$-triangles. If a triplet $(a,b,c)$ realizes $(a,c)\colon\delta\to\delta^{\prime}$, then we write it as
$$\xymatrix{
A \ar[r]^x \ar[d]^a & B\ar[r]^y \ar[d]^{b} & C\ar@{-->}[r]^{\del}\ar[d]^c&\\
A'\ar[r]^{x'} & B' \ar[r]^{y'} & C'\ar@{-->}[r]^{\del'} &}$$
and call $(a,b,c)$ a {\it morphism of $\EE$-triangles}.

\item[(4)] An object $P\in\B$ is called {\it projective} if
for any $\EE$-triangle $A\overset{x}{\longrightarrow}B\overset{y}{\longrightarrow}C\overset{\delta}{\dashrightarrow}$ and any morphism $c\in\B(P,C)$, there exists $b\in\B(P,B)$ satisfying $yb=c$.
We denote the subcategory of projective objects by $\mathcal P\subseteq\B$. Dually, the subcategory of injective objects is denoted by $\I\subseteq\B$.

\item[(5)] We say that $\B$ {\it has enough projective objects} if
for any object $C\in\B$, there exists an $\EE$-triangle
$A\overset{x}{\longrightarrow}P\overset{y}{\longrightarrow}C\overset{\delta}{\dashrightarrow}$
satisfying $P\in\mathcal P$. Dually we can define $\B$ {\it has enough injective objects}.

\item[(6)] Let $\mathcal{X}$ be a subcategory of $\B$. We say $\mathcal{X}$ is {\it extension-closed}
if a conflation $A\rightarrowtail B\twoheadrightarrow C$ satisfies $A,C\in\mathcal{X}$, then $B\in\mathcal{X}$.
\end{itemize}
\end{defn}

In this article, we always assume $\B$ has enough projectives and enough injectives.
\medskip

By \cite{NP}, we give the following useful remark, which will be used later in the proofs.

\begin{rem}\label{useful}
Let $\xymatrix{A\ar[r]^a &B \ar[r]^b &C \ar@{-->}[r] &}$ and $\xymatrix{X\ar[r]^x &Y \ar[r]^y &Z \ar@{-->}[r] &}$ be two $\EE$-triangles. Then
\begin{itemize}
\item[(a)] In this following commutative diagram
$$\xymatrix{
X\ar[r]^x \ar[d]_f &Y \ar[d]^g \ar[r]^y &Z \ar[d]^h \ar@{-->}[r] &\\
A\ar[r]^a &B \ar[r]^b &C \ar@{-->}[r] &}
$$
$f$ factors through $x$ if and only if $h$ factors through $b$.
\item[(b)] In the following commutative diagram
$$\xymatrix{
A\ar[r]^a \ar[d]_s &B \ar[d]^r \ar[r]^b &C \ar[d]^t \ar@{-->}[r] &\\
X\ar[r]^x \ar[d]_f &Y \ar[d]^g \ar[r]^y &Z \ar[d]^h \ar@{-->}[r] &\\
A\ar[r]^a &B \ar[r]^b &C \ar@{-->}[r] &}
$$
$fs=1_A$ implies $B$ is a direct summand of $C\oplus Y$ and $C$ is a direct summand of $Z\oplus B$; $ht=1_C$ implies $B$ is a direct summand of $A\oplus Y$ and $A$ is a direct summand of $X\oplus B$.
\item[(c)] If we have $b:B\xrightarrow{d_1} D \xrightarrow{d_2} C$ and $d_2: D\xrightarrow{d_3} B\xrightarrow{b} C$, then $B$ is a direct summand of $A\oplus D$.\\
If we have $a: A\xrightarrow{e_1} E\xrightarrow{e_2} B$ and $e_1:A \xrightarrow{a} B\xrightarrow{e_3} E$, then $B$ is a direct summand of $C\oplus E$.
\end{itemize}
\end{rem}

We first recall the following proposition (\cite[Proposition 1.20]{LN}), which (also the dual of it) will be used many times in the article.

\begin{prop}
Let $A\overset{x}{\longrightarrow}B\overset{y}{\longrightarrow}C\overset{\delta}{\dashrightarrow}$ be any $\EE$-triangle, let $f\colon A\rightarrow D$ be any morphism, and let $D\overset{d}{\longrightarrow}E\overset{e}{\longrightarrow}C\overset{f_{\ast}\delta}{\dashrightarrow}$ be any $\EE$-triangle realizing $f_{\ast}\delta$. Then there is a morphism $g$ which gives a morphism of $\EE$-triangles
$$\xymatrix{
A \ar[r]^{x} \ar[d]_f &B \ar[r]^{y} \ar[d]^g &C \ar@{=}[d]\ar@{-->}[r]^{\delta}&\\
D \ar[r]_{d} &E \ar[r]_{e} &C\ar@{-->}[r]_{f_{\ast}\delta}&
}
$$
and moreover, the sequence $A\overset{\svecv{f}{x}}{\longrightarrow}D\oplus B\overset{\svech{d}{-g}}{\longrightarrow}E\overset{e^{\ast}\delta}{\dashrightarrow}$ becomes an $\EE$-triangle.
\end{prop}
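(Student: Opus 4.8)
The plan is to prove the statement in two stages: first produce the morphism $g$ together with the asserted morphism of $\EE$-triangles, and then show that the lower row $A\xrightarrow{\svecv{f}{x}}D\oplus B\xrightarrow{\svech{d}{-g}}E$ is a conflation realizing $e^{\ast}\delta$. The first stage is immediate, while the second carries the real content.

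For the first stage I would observe that $(f,1_C)$ is a morphism of $\EE$-extensions $\delta\to f_{\ast}\delta$: the defining equality $f_{\ast}\delta=(1_C)^{\ast}(f_{\ast}\delta)$ holds trivially since $(1_C)^{\ast}$ is the identity on $\EE(C,D)$. As $\mathfrak{s}(\delta)=[A\xrightarrow{x}B\xrightarrow{y}C]$ and $\mathfrak{s}(f_{\ast}\delta)=[D\xrightarrow{d}E\xrightarrow{e}C]$, the compatibility axiom $(\star)$ built into the realization $\mathfrak{s}$ (part of (ET2)) supplies $g\in\B(B,E)$ such that $(f,g,1_C)$ realizes $(f,1_C)$. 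Unwinding this gives $gx=df$ and $eg=y$, which is precisely the claimed commutative diagram of $\EE$-triangles.

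The second stage is the substantive one. I would first realize $e^{\ast}\delta$ by some $\EE$-triangle $A\xrightarrow{p}P\xrightarrow{q}E\overset{e^{\ast}\delta}{\dashrightarrow}$. Applying $(\star)$ once more, now to the morphism of extensions $(1_A,e)\colon e^{\ast}\delta\to\delta$, yields $b\in\B(P,B)$ with $bp=x$ and $yb=eq$. Then $e(q-gb)=eq-yb=0$, so by the weak kernel property of the deflation $e$ (valid in any extriangulated category) the map $q-gb$ factors as $ds$ for some $s\in\B(P,D)$. Using $s$ and $b$ I would assemble a comparison morphism between $P$ and $D\oplus B$, check that it is an isomorphism intertwining $p$ with $\svecv{f}{x}$ and $q$ with $\svech{d}{-g}$, and conclude by the additivity of $\mathfrak{s}$ ((ET2)(ii)) and the equivalence of realizations that $[A\xrightarrow{\svecv{f}{x}}D\oplus B\xrightarrow{\svech{d}{-g}}E]$ equals $\mathfrak{s}(e^{\ast}\delta)$. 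An alternative route is to feed the $\EE$-triangle of $\delta$ together with a split $\EE$-triangle into (ET4) and then transport the resulting conflation along the morphism from the first stage.

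I expect the main obstacle to lie entirely in this second stage: constructing the comparison isomorphism and verifying that the realized extension is exactly $e^{\ast}\delta$, rather than a sign- or summand-twisted variant. Because inflations and deflations are only weak mono/epimorphisms, the matching of the matrix components $\svecv{f}{x}$ and $\svech{d}{-g}$ cannot be obtained by naive cancellation; it has to be extracted from the factorization (weak kernel/cokernel) properties recorded in Remark~\ref{useful} together with the functoriality of $\EE$ and the additivity of $\mathfrak{s}$. Once the comparison isomorphism is in hand, both the conflation property and the identification of the extension as $e^{\ast}\delta$ follow formally.
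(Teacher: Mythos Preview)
The paper does not prove this proposition; it is quoted verbatim from \cite[Proposition~1.20]{LN} and used as a black box. So there is no in-paper argument to compare yours against, and I will instead comment on whether your outline would actually go through.

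Your first stage is correct. The second stage, however, contains a genuine gap, not merely a bookkeeping nuisance. You propose to assemble a comparison isomorphism $P\to D\oplus B$ from the maps $s$ and $b$, with $bp=x$, $yb=eq$, and $ds=q-gb$. But no matrix built from $s,b$ satisfies the two intertwining relations simultaneously: for $\phi=\svecv{s}{b}$ one computes
\[
\svech{d}{-g}\,\phi \;=\; ds-gb \;=\; (q-gb)-gb \;=\; q-2gb,
\]
so $\svech{d}{-g}\phi=q$ would force $2gb=0$, while $e(gb)=yb=eq$ is typically nonzero. The same obstruction persists for every sign variant and for every admissible choice of $b$ and $s$ (they are only determined up to factorizations through $p$ and $d$, which does not affect $gb$). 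The deeper reason is that the ``two-out-of-three'' isomorphism principle for morphisms of $\EE$-triangles is unavailable here: you do not yet know that $A\to D\oplus B\to E$ is a conflation, so there is no second $\EE$-triangle against which to run a comparison argument. Weak kernel/cokernel properties, functoriality of $\EE$, and additivity of $\mathfrak{s}$ alone do not manufacture the missing isomorphism.

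The proof in \cite{LN} (going back to \cite{NP}) does not attempt to compare with an abstract realization of $e^\ast\delta$. Instead it produces the $\EE$-triangle $A\to D\oplus B\to E$ directly: one exhibits $\svecv{f}{x}$ as an inflation by writing it as a composite of $x$ with a split inflation and an automorphism of a direct sum, and then feeds this composite into {\rm (ET4)} to obtain the required conflation together with the identification of its extension class as $e^\ast\delta$ via the compatibilities (i)--(iii). Your ``alternative route'' gestures in this direction, but as written it is only a slogan; the substance of the proof is precisely the choice of which split $\EE$-triangle to adjoin and how {\rm (ET4)} is applied, and that is what is missing from your outline.
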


We prove the following lemma related to cotorsion pairs.

\begin{lem}\label{CP}
If $\C$ is rigid and strongly contravariantly finite, then $(\C,\C^{\bot_1})$ is a cotorsion pair.
\end{lem}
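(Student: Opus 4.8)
The plan is to verify the two axioms in the definition of a cotorsion pair for the pair $(\C,\C^{\bot_1})$. Axiom (a) is immediate: by definition of $\C^{\bot_1}$ we have $\EE(\C,\C^{\bot_1})=0$. So the whole work goes into axiom (b): for every object $B\in\B$ we must produce $\EE$-triangles $V_B\to U_B\to B\dashrightarrow$ with $U_B\in\C$, $V_B\in\C^{\bot_1}$, and $B\to V^B\to U^B\dashrightarrow$ with $U^B\in\C$, $V^B\in\C^{\bot_1}$.

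For the first $\EE$-triangle, the hypothesis gives us exactly what we need on the ``$U$'' side: since $\C$ is strongly contravariantly finite, there is a right $\C$-approximation $U_B\to B$ which is a deflation, so we get an $\EE$-triangle $V_B\xrightarrow{} U_B\xrightarrow{} B\overset{\delta}{\dashrightarrow}$ with $U_B\in\C$. The content is to show $V_B\in\C^{\bot_1}$, i.e.\ $\EE(\C,V_B)=0$. I would apply $\EE(C',-)$ for $C'\in\C$ to this $\EE$-triangle and use the long exact sequence
\[
\B(C',U_B)\to \B(C',B)\to \EE(C',V_B)\to \EE(C',U_B).
\]
The last term vanishes because $\C$ is rigid; and the first map is surjective precisely because $U_B\to B$ is a right $\C$-approximation. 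Hence $\EE(C',V_B)=0$, giving $V_B\in\C^{\bot_1}$.

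For the second $\EE$-triangle I would proceed dually-flavoured but using the machinery rather than a dual hypothesis (note we are \emph{not} assuming $\C$ is covariantly finite here). Take an $\EE$-triangle $V^B\to I\to B\dashrightarrow$ — wait, better: since $\B$ has enough injectives there is an $\EE$-triangle $B\xrightarrow{} I\xrightarrow{} B'\overset{\sigma}{\dashrightarrow}$ with $I$ injective. Injectives lie in $\C^{\bot_1}$, and in fact every object of $\C^{\bot_1}$ we need on the outside; the point is to repair the middle term so the right-hand object lands in $\C$. Apply strong contravariant finiteness to $B'$: there is a deflation $U^B\to B'$ with $U^B\in\C$, and by the first part its ``kernel'' $V'\to U^B\to B'\dashrightarrow$ has $V'\in\C^{\bot_1}$. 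Now pull back the $\EE$-triangle $B\to I\to B'\dashrightarrow$ along $U^B\to B'$ (using Proposition~1.20, i.e.\ the stated pullback proposition, in its dual form): this produces an $\EE$-triangle $B\to E\to U^B\dashrightarrow$ together with an $\EE$-triangle $V'\to E\to I\dashrightarrow$ (or $I\to E\to U^B$ depending on which variable we pull back in). From the $\EE$-triangle relating $E$ to $V'$ and $I$, since both $V'\in\C^{\bot_1}$ and $I\in\I\subseteq\C^{\bot_1}$ and $\C^{\bot_1}$ is extension-closed (it is cut out by the vanishing of a half-exact functor, so closed under extensions), we get $E\in\C^{\bot_1}$. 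Setting $V^B:=E$ and keeping $U^B\in\C$ gives the required $\EE$-triangle $B\to V^B\to U^B\dashrightarrow$.

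**The main obstacle** I anticipate is the second $\EE$-triangle: assembling it requires combining enough-injectives with a single application of strong contravariant finiteness and then invoking the pullback/pushout proposition to glue, and one has to be careful about which variable is pulled back and to check that the resulting middle term really sits in $\C^{\bot_1}$ — this last point rests on $\C^{\bot_1}$ being closed under extensions, which follows from the long exact sequence in $\EE(\C,-)$ but should be stated explicitly. The first $\EE$-triangle, by contrast, is a routine approximation-plus-long-exact-sequence argument. I would also remark that $\C\subseteq\C^{\bot_1}$ by rigidity, which is implicitly used when identifying objects of $\C$ (such as $U^B$) as candidates inside $\C^{\bot_1}$-type arguments.
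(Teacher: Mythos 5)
Your proposal is correct and follows essentially the same route as the paper: the approximation deflation $U_B\to B$ with rigidity gives the first $\EE$-triangle, and the second is obtained by composing an enough-injectives $\EE$-triangle with a strong right $\C$-approximation of its cone and gluing via the pullback square, then using extension-closure of $\C^{\bot_1}$ to place the middle term. Your write-up is in fact slightly more careful than the paper's, which compresses the surjectivity-of-approximation argument into the phrase ``the rigidity of $\C$ implies $B_1\in\C^{\bot_1}$'' and does not explicitly note the extension-closure of $\C^{\bot_1}$.
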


\begin{proof}
Since $\B$ has enough injectives, any object $A\in\B$ admits an $\EE$-triangle $\xymatrix{A \ar[r] &I \ar[r] &B \ar@{-->}[r] &}$ where $I$ is injective. Since $\C$ is strongly contravariantly finite, the object $B$ admits a conflation $$\xymatrix{B_1 \ar[r] &C_0 \ar[r]^{f_0} &B \ar@{-->}[r] &}$$ where $f_0$ is a right $\C$-approximation of $B$. The rigidity of $\C$ implies $B_1\in \C^{\bot_1}$. We have the following commutative diagram
$$\xymatrix{
&A \ar[d] \ar@{=}[r] &A \ar[d]\\
B_1 \ar@{=}[d] \ar[r] &X \ar[r] \ar[d] &I \ar@{-->}[r]\ar[d]&\\
B_1 \ar[r] &C_0\ar@{-->}[d] \ar[r]^{f_0} &B\ar@{-->}[d]\ar@{-->}[r]&\\
&&}
$$
where $X\in \C^{\bot_1}$. Hence by definition, the pair $(\C,\C^{\bot_1})$ is a cotorsion pair.
\end{proof}

\section{Main results}
Let $\A$ be an additive category and $\X$ be a subcategory of $\A$. We denote by $\A/\X$
the category whose objects are objects of $\A$ and whose morphisms are elements of
$\Hom_{\A}(A,B)/\X(A,B)$ for $A,B\in\A$, where $\X(A,B)$ the subgroup of $\Hom_{\A}(A,B)$ consisting of morphisms
which factor through an object in $\X$. Such category is called the quotient category
of $\A$ by $\X$. For any morphism $f\colon A\to B$ in $\A$, we denote by $\overline{f}$ the image of $f$ under
the natural quotient functor $\A\to\A/\X$.

\vspace{1mm}

We first introduce some notions.

Let $\B'$ and $\B''$ be two subcategories of $\B$, denote by $\CoCone(\B',\B'')$ the subcategory of objects $X$ admitting an $\EE$-triangle $\xymatrix{X \ar[r] &B'\ar[r] &B''\ar@{-->}[r] &}$ where $B'\in \B'$ and $B''\in \B''$. We denote by $\Cone(\B',\B'')$ the subcategory of objects $Y$ admitting an $\EE$-triangle $\xymatrix{B' \ar[r] &B''\ar[r] &Y\ar@{-->}[r] &}$ where $B'\in \B'$ and $B''\in \B''$.

Let $\Omega \B'=\CoCone(\mathcal P,\B')$ and $\Sigma \B'=\Cone(\B',\mathcal I)$. We write an object $D$ in the form $\Omega B$ if it admits an $\EE$-triangle $\xymatrix{D \ar[r] &P \ar[r] &B \ar@{-->}[r] &}$ where $P\in \mathcal P$. We write an object $D'$ in the form $\Sigma B'$ if it admits an $\EE$-triangle $\xymatrix{B' \ar[r] &I \ar[r] &D' \ar@{-->}[r] &}$ where $I\in \mathcal I$.

In the rest of this article, let $(\U,\V),(\V,\U)$ be cotorsion pairs, we denote $\U\cap \V$ by $\C$. We denote subcategory $\{ \text{direct sums of objects in }\U \text{ and objects in }\V\}$ by $\K$, and we say $\K=\U+\V$. Let $\h=\CoCone(\C,\U)\cap \Cone(\V,\C)$, $\h/\C$ is called the heart of $(\U,\V)$, it is abelian by \cite{LN}. Let $H$ be the cohomological functor defined in \cite{LN}, it sends an $\EE$-triangle $\xymatrix{A \ar[r]^f &B \ar[r]^g &C \ar@{-->}[r] &}$ to an exact sequence $H(A)\xrightarrow{H(f)} H(B)\xrightarrow{H(g)} H(C)$ in $\h/\C$, moreover, $H(B)=0$ if and only if $B\in \K$.

Since $\U,\V$ are extension closed subcategories of $\B$, they are extriangulated subcategories. Moreover, $\C$ is the subcategory of enough projective-injective objects in $\U$ and $\V$, according to \cite{NP}, $\U/\C$ and $\V/\C$ are triangulated categories.

\begin{lem}\label{direct summand}
$\h$ is closed under direct summands.
\end{lem}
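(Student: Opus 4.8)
The plan is to show that $\h = \CoCone(\C,\U) \cap \Cone(\V,\C)$ is closed under direct summands by checking each of the two factors separately, since an intersection of two direct-summand-closed subcategories is again direct-summand-closed. So it suffices to prove that $\CoCone(\C,\U)$ is closed under summands and, dually, that $\Cone(\V,\C)$ is closed under summands. I will focus on $\CoCone(\C,\U)$; the statement for $\Cone(\V,\C)$ follows by the dual argument (using (ET4)$^{\mathrm{op}}$ in place of (ET4), and swapping the roles of $\U$ and $\V$ together with the fact that $(\V,\U)$ is also a cotorsion pair).

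First I would set up the situation: suppose $X \oplus Y \in \CoCone(\C,\U)$, so there is an $\EE$-triangle $X\oplus Y \xrightarrow{} C_0 \xrightarrow{} U_0 \dashrightarrow$ with $C_0 \in \C$ and $U_0 \in \U$. Since $(\U,\V)$ is a cotorsion pair, pick an $\EE$-triangle $X \to V^X \to U^X \dashrightarrow$ with $V^X \in \V$, $U^X \in \U$; actually I want to present $X$ itself as the cocone of something over an object of $\U$, so instead I would use that $\C$ is strongly contravariantly finite to take a conflation $X_1 \to C_X \xrightarrow{f} X$ with $f$ a right $\C$-approximation and (by rigidity, as in the proof of Lemma~\ref{CP}) $X_1 \in \C^{\bot_1}$. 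The idea is then to compare the $\EE$-triangle for $X \oplus Y$ with the one built from the approximations of $X$ and $Y$, and use the machinery of Remark~\ref{useful}(b)--(c) together with the octahedral-type axiom (ET4) to extract from $X\oplus Y \to C_0 \to U_0$ a triangle witnessing $X \in \CoCone(\C,\U)$.

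Concretely, the key steps I expect are: (1) form the direct sum of right $\C$-approximation triangles for $X$ and $Y$ to get $(X_1\oplus Y_1) \to (C_X \oplus C_Y) \to (X\oplus Y) \dashrightarrow$; (2) splice this with the given triangle $X\oplus Y \to C_0 \to U_0 \dashrightarrow$ via (ET4) to produce a triangle $(X_1 \oplus Y_1) \to Z \to U_0 \dashrightarrow$ and a triangle $(C_X\oplus C_Y) \to Z \to C_0 \dashrightarrow$ whose middle term $Z$ sits between a $\C$-object and a $\U$-object, hence (since $\C \subseteq \U$ and $\U$ is extension-closed) $Z \in \U$; (3) now use the retraction $X\oplus Y \to X$ and Remark~\ref{useful}(b)/(c) to split off the $Y$-part, identifying $X$ as a direct summand of something of the form $C \oplus (\text{object of }\U)$ sitting in a suitable triangle; (4) finally absorb the projective-injective-free summands and, using that $\C$ and $\U$ are closed under direct summands, conclude that $X$ admits an $\EE$-triangle $X \to C' \to U' \dashrightarrow$ with $C' \in \C$, $U' \in \U$. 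A cleaner route for step (3)--(4) may be to argue directly that the class $\CoCone(\C,\U)$ equals $\{\,B : H(B) \text{ lies in a certain piece and } \EE(\C,B)\text{-conditions hold}\,\}$ using the cohomological functor $H$ from \cite{LN} and the fact that $H(B)=0$ iff $B\in\K$; if $\CoCone(\C,\U)$ can be characterized as the preimage under $H$ of a subcategory of the abelian heart closed under summands (or as an $\EE$-perpendicular condition), summand-closure is immediate. I would try this homological characterization first and fall back on the explicit (ET4)-diagram chase otherwise.

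The main obstacle I anticipate is step (3): passing from "$X\oplus Y$ is a cocone of $\C$ over $\U$" to "$X$ alone is such a cocone" is not formal, because cocone presentations do not obviously respect direct summands — the naive attempt to restrict the triangle along the idempotent $X\oplus Y \to X \to X\oplus Y$ produces a triangle whose outer terms are only summands of objects in $\C$ and $\U$ (which is fine, as these are summand-closed) but whose middle/connecting data must be checked to still be a genuine $\EE$-triangle; this is exactly where Remark~\ref{useful}(b),(c) on splitting summands out of $\EE$-triangles does the real work, and getting the bookkeeping right there is the crux.
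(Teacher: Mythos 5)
Your overall reduction --- prove that $\CoCone(\C,\U)$ is closed under direct summands and obtain $\Cone(\V,\C)$ by duality, then intersect --- is exactly the paper's plan, but the actual argument for that reduction is missing, and the concrete steps you propose point in the wrong direction. A right $\C$-approximation deflation $C_X\to X$ yields a conflation $X_1\to C_X\to X$, which presents $X$ as a \emph{cone}, not as an object of $\CoCone(\C,\U)$; summing such triangles over $X$ and $Y$ and splicing with the given triangle via (ET4) never produces the required presentation $X\to C'\to U'$ with $C'\in\C$, $U'\in\U$ (and note that $X_1$ only lies in $\C^{\bot_1}$, which in this lemma --- where $\C=\U\cap\V$ is not assumed pre-cluster tilting --- need not lie in $\U$ or $\V$). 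You yourself flag step (3), extracting a presentation of $X$ alone from one of $X\oplus Y$, as an unresolved ``crux,'' and the homological fallback is not available either: nothing in the paper identifies $\CoCone(\C,\U)$ as the $H$-preimage of a summand-closed subcategory. So the proposal has a genuine gap precisely at the point where the proof has to happen.

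The paper's argument is short and uses a different, more elementary idea that you should look for in such situations: given $X\oplus Y\xrightarrow{\svech{x}{y}} C_1\to U_2\dashrightarrow$ with $C_1\in\C$, $U_2\in\U$, compose the split mono $\svecv{1}{0}\colon X\to X\oplus Y$ with the given inflation to get an inflation $x\colon X\to C_1$ into the \emph{same} object $C_1$, completing to an $\EE$-triangle $X\xrightarrow{x}C_1\xrightarrow{c}U\dashrightarrow$. Since $\EE(U_2,C_1)=0$, every morphism $X\oplus Y\to C_1$ factors through $\svech{x}{y}$; applying this to $\svech{x}{0}$ gives $f\colon C_1\to C_1$ with $fx=x$ and $fy=0$, hence a composite morphism of $\EE$-triangles from the triangle of $X$ to that of $X\oplus Y$ and back which is $1_X$ on the left and $f$ in the middle. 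By the standard consequence of (ET3) recorded in Remark \ref{useful}, the induced endomorphism $ba$ of $U$ satisfies $1_U-ba=cd$ for some $d\colon U\to C_1$, so $U$ is a direct summand of $C_1\oplus U_2\in\U$ and therefore $U\in\U$, i.e.\ $X\in\CoCone(\C,\U)$. This factoring trick through the rigidity $\EE(U_2,C_1)=0$ --- not an (ET4) splice of approximation triangles --- is the missing ingredient.
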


\begin{proof}
Since $\h=\CoCone(\C,\U)\cap \Cone(\V,\C)$, we will show that $\CoCone(\C,\U)$ is closed under direct summands. By dual, we can show that $\Cone(\V,\C)$ is closed under direct summands.

Assume we have an $\EE$-triangle $$\xymatrix{X\oplus Y \ar[r]^-{\svech{x}{y}} &C_1 \ar[r] &U_2 \ar@{-->}[r] &}$$ where $C_1\in \C$ and $\U_2\in \U$, then $x$ is an inflation and it admits an $\EE$-triangle $$\xymatrix{X \ar[r]^-{x} &C_1 \ar[r]^c &U \ar@{-->}[r] &}.$$ Since $\EE(U_2,C_1)=0$, there is a morphism $f\colon C_1\to C_1$ such that
$f(x\ \ y)=(x\ \ 0)$. In particular, we have $fx=x$ and $fy=0$. Hence we have the following commutative diagram
$$\xymatrix{
X \ar[r]^-{x} \ar[d]_-{\svecv{1}{0}} &C_1 \ar@{=}[d] \ar[r]^c &U \ar[d]^a \ar@{-->}[r] &\\
X\oplus Y \ar[r]^-{\svech{x}{y}} \ar[d]_-{\svech{1}{0}} &C_1 \ar[d]^f \ar[r] &U_2 \ar[d]^b \ar@{-->}[r] &\\
X \ar[r]^-{x} &C_1 \ar[r]^c &U \ar@{-->}[r] &.
}
$$
It follows that there exists a morphism $d:U\rightarrow C_1$ such that $1_{U}-ba=cd$.
Hence $U$ is a direct summand of $C_1\oplus U_2\in \U$ and then $U\in \U$. This implies $X\in \CoCone(\C,\U)$.
\end{proof}

\textbf{Proof of Theorem \ref{main1}:}

\begin{proof}
We show $(\C,\K)$ is a cotorsion pair, then dually $(\K,\C)$ is also a cotorsion pair.\\
Let $B$ be an object in $\B$, since $(\V,\U)$ is a cotorsion pair, $B$ admits a commutative diagram of $\EE$-triangles
$$\xymatrix{
\Omega U_B \ar[r] \ar[d]_f &P \ar[r] \ar[d] &U_B \ar@{=}[d] \ar@{-->}[r] &\\
B \ar[r] &V_B \ar[r] &U_B \ar@{-->}[r] &
}$$
where $U_B\in \U$ and $V_B$ and $P\in \mathcal P$. We get $H(f)$ is an epimorphism. $\Omega U_B$ admits the following commutative diagram
$$\xymatrix{
\Omega U_B \ar[r] \ar[d] &P \ar[r] \ar[d] &U_B \ar@{=}[d] \ar@{-->}[r] &\\
U \ar[r] \ar[d] &C \ar[r] \ar[d] &U_B \ar@{-->}[r] &\\
V \ar@{=}[r] \ar@{-->}[d]^{\sigma} &V \ar@{-->}[d]\\
&&&&
}
$$
where $U\in \U$ and $V\in \V$. From the second column we get $C= P\oplus V\in \V$. From the second row we get $C\in \U$, hence $C\in \C$ and $V$ is a direct summand of $C$, then $V\in \C$. Now $B$ admits the following commutative diagram:
$$\xymatrix{
\Omega U_B \ar[r] \ar[d]_f &U \ar[r] \ar[d] &V \ar@{=}[d] \ar@{-->}[r]^{\sigma} &\\
B \ar[r]_g &K \ar[r]_h &V \ar@{-->}[r]^{f_*\sigma} &
}
$$
By applying $H$ we get the following exact sequence $H(\Omega B)\xrightarrow{H(f)} H(B)\xrightarrow{H(g)} H(K) \xrightarrow{H(h)} H(V)=0$. Since $H(f)$ is an epimorphism, we have $H(g)=0$, hence $H(K)=0$, which implies $K\in \K$. $B$ admits an $\EE$-triangle $\xymatrix{\Omega B \ar[r]  &P_B \ar[r] &B \ar@{-->}[r] &}$ where $P_B\in \mathcal P$, by the previous argument, $\Omega B$ admits an $\EE$-triangle $\xymatrix{\Omega B \ar[r] &K' \ar[r] &C' \ar@{-->}[r] &}$ where $K'\in \K$ and $C'\in \C$, hence we get the following commutative diagram\\
$$\xymatrix{
\Omega B \ar[d] \ar[r] &K' \ar[d] \ar[r] &C' \ar@{=}[d] \ar@{-->}[r] &\\
P_B \ar[r] \ar[d] &P_B\oplus C' \ar[r] \ar[d] & C' \ar@{-->}[r] &\\
B \ar@{=}[r] \ar@{-->}[d] &B \ar@{-->}[d] \\
&& &&
}
$$
Since $P_B\oplus C'\in \C$ and $\EE(\C,\K)=0$, by definition $(\C,\K)$ is a cotorsion pair.\\
Hence $\C$ is pre-cluster tilting. Now if $\U$ is maximal, $\U$ has to be $\K$.\\
On the other hand, if $\C$ is pre-cluster tilting, by Lemma \ref{CP} and its dual, we have cotorsion pairs $(\K,\C),(\C,\K)$ where $\K$ is maximal by the previous argument.
\end{proof}

\begin{lem}\label{serre}
 Let $A\xrightarrow{ f} B$ be a morphism in $\B$ such that $A,B$ do not have direct summand in $\V$. Then
\begin{itemize}
\item[(a)]  If $\overline f$ is a monomorphism in $\B/\V$ and $B\in \U$, then $A\in \U$.
\item[(b)]  If $\overline f$ is an epimorphism in $\B/\V$ and $A\in \U$, then  $B\in \U$.
\end{itemize}
\end{lem}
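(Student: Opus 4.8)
The plan is to prove both statements via the cohomological functor $H\colon\B\to\h/\C$, using that $H$ annihilates every object of $\K$ (so in particular $H(\U)=H(\V)=0$) and that both $H$ and the quotient functor $\B\to\B/\V$ kill every morphism which factors through an object of $\V$ (for $H$ because $\V\subseteq\K$). The first reduction I would make is: if an object $W$ lies in $\K$ and has no direct summand in $\V$, then $W\in\U$; indeed, writing $W\cong U\oplus V$ with $U\in\U,V\in\V$ and invoking that $\B$ is Krull--Schmidt while $\U,\V$ are closed under summands, the hypothesis forces $V=0$. Since $H(X)=0$ exactly for $X\in\K$, proving (b) reduces to showing $H(B)=0$, and proving (a) to showing $H(A)=0$.

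For (b) I would start from the $\EE$-triangle $A\overset{x}{\longrightarrow}V^A\overset{y}{\longrightarrow}U^A\overset{\delta}{\dashrightarrow}$ with $V^A\in\V$, $U^A\in\U$ supplied by the cotorsion pair $(\U,\V)$, realize $f_\ast\delta$ by an $\EE$-triangle $B\overset{d}{\longrightarrow}E\overset{e}{\longrightarrow}U^A\overset{f_\ast\delta}{\dashrightarrow}$, and feed $f\colon A\to B$ into \cite[Proposition 1.20]{LN}. This produces a morphism $g\colon V^A\to E$ with $gx=df$ and an $\EE$-triangle
$$A\overset{\svecv{f}{x}}{\longrightarrow}B\oplus V^A\overset{\svech{d}{-g}}{\longrightarrow}E\overset{e^\ast\delta}{\dashrightarrow}.$$
Applying $H$ and using $H(A)=H(V^A)=0$ (so that $H(B\oplus V^A)$ is identified with $H(B)$) gives that $H(d)\colon H(B)\to H(E)$ is a monomorphism. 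On the other hand $df=gx$ factors through $V^A\in\V$, so $\overline d\,\overline f=\overline{df}=0$ in $\B/\V$; since $\overline f$ is epimorphic there, $\overline d=0$, i.e.\ $d$ factors through $\V$, hence $H(d)=0$. A zero monomorphism has zero domain, so $H(B)=0$ and $B\in\U$.

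Part (a) I would handle by the dual construction: using the $\EE$-triangle $U_B\to V_B\overset{q}{\longrightarrow}B\overset{\theta}{\dashrightarrow}$ from the cotorsion pair $(\V,\U)$ (with $V_B\in\V$), realizing $f^\ast\theta$ by an $\EE$-triangle $U_B\to E\overset{e}{\longrightarrow}A\overset{f^\ast\theta}{\dashrightarrow}$, and applying the dual of \cite[Proposition 1.20]{LN} to $f$, one obtains $g\colon E\to V_B$ with $fe=qg$ and an $\EE$-triangle $E\overset{\svecv{g}{e}}{\longrightarrow}V_B\oplus A\overset{\svech{q}{-f}}{\longrightarrow}B\dashrightarrow$. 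Now $H(V_B)=H(B)=0$ forces $H(e)\colon H(E)\to H(A)$ to be epimorphic, while $fe=qg$ factoring through $V_B\in\V$ gives $\overline f\,\overline e=0$ in $\B/\V$; since $\overline f$ is monic there, $\overline e=0$, so $H(e)=0$, a zero epimorphism has zero codomain, and $H(A)=0$, i.e.\ $A\in\U$. (Equivalently, (a) is (b) applied in the opposite extriangulated category $\B^{\op}$, in which $(\V,\U),(\U,\V)$ are again cotorsion pairs of the required shape, $\C,\K$ are unchanged, and ``monomorphism in $\B/\V$'' becomes ``epimorphism in $\B^{\op}/\V$''.)

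The argument is essentially forced once one accepts that the epimorphism (resp.\ monomorphism) hypothesis on $\overline f$ can be transported to $H$; the only thing requiring care is the choice of the auxiliary $\EE$-triangle, so that after applying $H$ exactly one of the two relevant conditions survives and the consecutive-composite identity $gx=df$ (resp.\ $qg=fe$) lands inside $\V$. I do not expect a genuine obstacle beyond this bookkeeping.
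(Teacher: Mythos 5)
Your proof is correct and follows essentially the same route as the paper: pull back (resp.\ push out) the cotorsion-pair $\EE$-triangle along $f$, use the $\EE$-triangle $K'\to V_B\oplus A\to B$ from \cite[Proposition 1.20]{LN} to get exactness under $H$, kill the relevant map using the monic/epic hypothesis on $\overline f$ in $\B/\V$, and conclude $H=0$ hence membership in $\K$, then in $\U$ by the Krull--Schmidt summand argument. If anything, your version makes explicit the auxiliary $\EE$-triangle that the paper's exact sequence $H(K')\to H(A)\to H(B)$ implicitly relies on.
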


\begin{proof}
Since $A$ admits an $\EE$-triangle $\xymatrix{A\ar[r] &V^A \ar[r] &U^A \ar@{-->}[r] &}$ where $V^A\in \V$ and $U^A\in \U$, and $B$ admits an $\EE$-triangle $\xymatrix{ U_B \ar[r] &V_B \ar[r] &B \ar@{-->}[r] &}$ where $U_B\in \U$ and $V_B\in \V$, we have the following commutative diagram
$$\xymatrix{
U_B \ar@{=}[d] \ar[r] &K' \ar[r]^{g'} \ar[d] &A\ar[r] \ar[d]_f &V^A \ar[d] \ar[r] &U^A \ar@{=}[d]\\
U_B \ar[r] &V_B \ar[r] &B \ar[r]_g &K \ar[r] &U^A.
}
$$
We only prove (a), since (b) is similar.\\
(a) If $\overline f$ is a monomorphism in $\B/\V$, then $\overline {g'}=0$, which implies $g'$ factors through $\V$. If $B\in \U$, we have an exact sequence $H(K') \xrightarrow{H(g')=0} H(A) \xrightarrow{H(f)} H(B)=0$, which implies $H(A)=0$, hence $A\in \K$. Since $\K=\U+\V$ and $A$ has no direct summand in $\V$, we get $A\in \U$.
\end{proof}

\begin{prop}
 If $\B/\V$ is abelian, then $\U/\C$ is abelian.
\end{prop}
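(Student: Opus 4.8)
The plan is to exhibit $\U/\C$ as a full subcategory of the abelian category $\B/\V$ on which the abelian structure restricts. Since $\C\subseteq\V$, the identity on objects induces an additive functor $F\colon\U/\C\to\B/\V$, and the first (and essential) step is to check that $F$ is fully faithful, i.e.\ that $\V(X,Y)=\C(X,Y)$ inside $\Hom_\B(X,Y)$ for all $X,Y\in\U$. The inclusion ``$\supseteq$'' is trivial; for ``$\subseteq$'' suppose $f\colon X\to Y$ factors as $X\xrightarrow{a}V\xrightarrow{b}Y$ with $V\in\V$. Since $\C$ is the subcategory of projective--injective objects of the Frobenius extriangulated category $\U$, there is an $\EE$-triangle $X\xrightarrow{\iota}C^X\to X'\dashrightarrow$ with $C^X\in\C$ and $X'\in\U$; applying $\B(-,V)$ and using $\EE(X',V)=0$ (as $X'\in\U$, $V\in\V$) shows that $a$ extends along $\iota$, so $f$ factors through $C^X\in\C$. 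Thus $F$ is fully faithful.

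Next I would show that kernels and cokernels of morphisms of $\U/\C$ stay inside $\U$. Let $\bar f\colon X\to Y$ be a morphism of $\U/\C$. Using that $\B$ is Krull--Schmidt, and that a direct summand of an object of $\U$ lying in $\V$ lies in $\U\cap\V=\C$ and hence vanishes in both $\U/\C$ and $\B/\V$, we may assume $X,Y$ have no direct summand in $\V$. Let $K\xrightarrow{k}X$ and $Y\xrightarrow{c}Q$ be a kernel and a cokernel of $F\bar f$ in $\B/\V$, and (again by Krull--Schmidt) assume $K,Q$ have no summand in $\V$. Lifting $k$ and $c$ to morphisms of $\B$, they represent a monomorphism and an epimorphism of $\B/\V$, so Lemma \ref{serre}(a) gives $K\in\U$ and Lemma \ref{serre}(b) gives $Q\in\U$. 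By full faithfulness of $F$, these are a kernel and a cokernel of $\bar f$ already in $\U/\C$. Hence $\U/\C$ has all kernels and cokernels and $F$ preserves them.

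The conclusion is then formal. A fully faithful functor that preserves kernels and cokernels also reflects monomorphisms and epimorphisms, so a morphism of $\U/\C$ is monic (resp.\ epic) exactly when its image in $\B/\V$ is. Given a monomorphism $\bar m\colon A\to B$ of $\U/\C$, form its cokernel $B\to Q$ in $\U/\C$; by the previous step this is also its cokernel in $\B/\V$, and there, since $\B/\V$ is abelian, $\bar m$ is the kernel of $B\to Q$. Comparing with the kernel of $B\to Q$ computed in $\U/\C$ (which $F$ preserves) and using full faithfulness, $\bar m$ is a kernel in $\U/\C$; dually every epimorphism is a cokernel. Together with the existence of biproducts this shows $\U/\C$ is abelian.

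I expect Step 1 to be the main obstacle: upgrading a factorisation through $\V$ to a factorisation through $\C$, which is exactly where one must use that $\C$ consists of the injective objects of the Frobenius category $\U$ together with $\EE(\U,\V)=0$. A secondary point needing care is the handling of $\V$-summands in Step 2 so that the hypotheses of Lemma \ref{serre} genuinely apply, and the routine verification that (co)kernels taken in $\B/\V$ serve as (co)kernels in $\U/\C$ via full faithfulness.
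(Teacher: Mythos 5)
Your proposal is correct and follows essentially the same route as the paper: identify $\U/\C$ as a full subcategory of $\B/\V$ (a morphism between objects of $\U$ factors through $\V$ iff it factors through $\C$), use Lemma \ref{serre} to show that kernels and cokernels computed in $\B/\V$ land in $\U$, and then transfer the ``every mono is the kernel of its cokernel'' property from the abelian category $\B/\V$. The only difference is that you spell out the proof of the full-faithfulness step (via the $\EE$-triangle $X\to C^X\to U^X$ and $\EE(\U,\V)=0$), which the paper merely asserts.
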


\begin{proof}
Note that a morphism in $\U$ factors through $\V$ if and only if it factors through $\C$. Since $\B/\V$ is abelian, then by Lemma \ref{serre}, $\U/\C$ has kernels and cokernals. Now it is enough to show any monomorphism in $\U/\C$ is also a monomorphism in $\B/\V$, the case for epimorphism is by dual.

Let $\overline k:U_1\to U_2$ be a monomorphism in $\U/\C$, it has a kernel $\overline f:X\to U_1$ in $\B/\V$. By Lemma \ref{serre}, $X\in \U$, then $\overline {kf}=0$ implies $\overline f=0$, which means $\overline k$ is a kernel in $\U/\C$. Then $\overline k$ is the kernel of its cokernel $\overline l:U_2\to Y$ in $\B/\V$. By Lemma \ref{serre}, $Y\in \U$, hence $\overline k$ is the kernel some morphism in $\U/\C$. This shows that $\U/\C$ is abelian. 
\end{proof}

\begin{rem}
Since $\U/\C$ is also triangulated, it is semi-simple, which means any monomorphism is a section and any epimorphism is a retraction.
\end{rem}

\begin{cor}\label{zero-iso}
 If $\B/\V$ is abelian, assume that $f: U_1\to U_2$ is a morphism where $U_1,U_2$ are indecomposable objects in $\U$, then $f$ is an isomorphism in $\B$ or factors through $\C$.
\end{cor}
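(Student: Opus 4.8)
The plan is to use the preceding Proposition and Remark, which say that under the hypothesis ``$\B/\V$ abelian'' the quotient $\U/\C$ is a semisimple abelian category, together with the Krull--Schmidt property of $\B$, by virtue of which $\End_{\B}(U)$ is local for every indecomposable $U$.

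First I would dispose of a degenerate case: if $U_1\in\C$ or $U_2\in\C$, then $f$ factors through $\C$ (through $1_{U_1}$, respectively $1_{U_2}$), and there is nothing to prove; so assume $U_1,U_2\notin\C$. Next I would check that $U_1$ and $U_2$ are still indecomposable (and nonzero) in $\U/\C$. We have $\End_{\U/\C}(U_i)=\End_{\B}(U_i)/\C(U_i,U_i)$, where $\C(U_i,U_i)$ is the ideal of endomorphisms of $U_i$ factoring through $\C$. This ideal is proper: if $1_{U_i}$ factored through some $C\in\C$, then $U_i$ would be a direct summand of $C$, hence $U_i\in\C$, contrary to our assumption. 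Since $\End_{\B}(U_i)$ is local with maximal ideal $\Rad\End_{\B}(U_i)$, we obtain $\C(U_i,U_i)\subseteq\Rad\End_{\B}(U_i)$, so $\End_{\U/\C}(U_i)$ is a nonzero local ring; thus $U_i$ is indecomposable in $\U/\C$.

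Now look at $\overline f\colon U_1\to U_2$ in $\U/\C$. If $\overline f=0$ then $f$ factors through $\C$ and we are done, so assume $\overline f\neq 0$. In the abelian category $\U/\C$ write $\overline f=\iota\pi$ with $\pi\colon U_1\to I$ epi and $\iota\colon I\to U_2$ mono, where $I=\Im\overline f\neq 0$. By the Remark $\U/\C$ is semisimple, so $\pi$ is a retraction and $\iota$ is a section; in particular $I$ is a direct summand of $U_1$ and of $U_2$ in $\U/\C$. Since $U_1$ and $U_2$ are indecomposable in $\U/\C$ and $I\neq 0$, both $\pi$ and $\iota$ must be isomorphisms, hence $\overline f$ is an isomorphism in $\U/\C$. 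To finish, choose $g\colon U_2\to U_1$ with $\overline{gf}=1_{U_1}$ and $\overline{fg}=1_{U_2}$. Then $1_{U_1}-gf\in\C(U_1,U_1)\subseteq\Rad\End_{\B}(U_1)$, so $gf$ is invertible in $\End_{\B}(U_1)$, and similarly $fg$ is invertible in $\End_{\B}(U_2)$. Consequently $f$ admits the left inverse $(gf)^{-1}g$ and the right inverse $g(fg)^{-1}$, so $f$ is an isomorphism in $\B$.

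The only real subtlety I anticipate is the two-way transfer between $\B$ and $\U/\C$ --- namely controlling the ideal $\C(U_i,U_i)$ inside the local ring $\End_{\B}(U_i)$, both to see that $U_i$ stays indecomposable in $\U/\C$ and to lift the isomorphism $\overline f$ back to $\B$. Everything else is a formal consequence of the semisimplicity of $\U/\C$ recorded in the Remark.
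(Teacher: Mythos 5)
Your proof is correct, and it follows the same basic strategy as the paper (epi--mono factorization plus semisimplicity of $\U/\C$ plus a Krull--Schmidt lifting), but it is executed in a different quotient category, which changes the supporting facts you need. The paper factors $\overline f$ in $\B/\V$, invokes Lemma \ref{serre} to see that the intermediate object $B$ of the factorization lies in $\U$, and only then uses the semisimplicity of $\U/\C$ to split the factorization; your version factors $\overline f$ directly inside the abelian category $\U/\C$, so the intermediate object is in $\U$ for free and Lemma \ref{serre} is not needed at all. The price you pay is the two-way transfer you yourself flag: you must check that $U_1,U_2$ remain indecomposable in $\U/\C$, and you must lift the isomorphism back to $\B$; both are handled correctly by the observation that $\C(U_i,U_i)$ is a proper two-sided ideal of the local ring $\End_\B(U_i)$ and hence sits inside $\Rad\End_\B(U_i)$. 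In fact your lifting step is spelled out more carefully than the paper's, which compresses it into the single sentence ``Since $U_1,U_2$ are indecomposable, then $f$ is an isomorphism in $\B$''; the radical argument you give (with $\C(U_i,U_i)$ replaced by the ideal of morphisms factoring through $\V$, noting that morphisms between objects of $\U$ factor through $\V$ iff they factor through $\C$) is exactly what is needed to justify that sentence. So your route is a clean, slightly more self-contained variant of the paper's proof.
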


\begin{proof}
If $\overline f\neq 0$, then $\overline f=\overline {hg}$ where $0 \neq \overline g\colon U_1\to B$ is an epimorphism and $0\neq \overline h\colon B\to U_2$ is a monomorphism. By Lemma \ref{serre}, $B$ lies in $\U$. Since $\U/\C$ is semi-simple, $\overline g$ splits, then $B\simeq U_1$ in $\B/\V$. By the same method we can get $B\simeq U_2$. Hence $\overline f:U_1\xrightarrow{\simeq} U_2$ in $\B/\V$. Since $U_1,U_2$ are indecomposable, then $f$ is an isomorphism in $\B$.
\end{proof}

Let $\widetilde{\K}$ (resp. $\widetilde{\U}$, $\widetilde{\V}$) be subcategory of objects which do not have direct summand in $\C$.

\begin{lem}\label{app}
Let $\D$ be a rigid subcategory, $\xymatrix{D \ar[r]^s &C \ar[r]^t &K \ar@{-->}[r] &}$ be an $\EE$-triangle where $K\in \widetilde{\K}$ and $t$ be a right $\C$-approximation (resp. $\mathcal P$ or $\mathcal I$-approximation). Then
\begin{itemize}
\item[(a)] If $K$ is indecomposable, we have $D=C_0\oplus X$ where $C_0\in \mathcal C$ (resp. $\mathcal P$ or $\mathcal I$) and $X$ is indecomposable and $X$ does not belong to $\C$. Moreover, if $t$ is right minimal, then $D=X$.
\item[(b)] $K$ is indecomposable if $D$ is indecomposable.
\end{itemize}
\end{lem}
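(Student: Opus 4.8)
The plan is to treat the three cases uniformly: write $\mathcal{A}$ for whichever of $\C$, $\mathcal P$, $\mathcal I$ is being used, and record first that in every case $\mathcal{A}$ is rigid, closed under direct summands, and satisfies $\mathcal{A}\subseteq\C$ (for $\mathcal{A}=\mathcal P$ or $\mathcal{A}=\mathcal I$ the inclusion $\mathcal{A}\subseteq\U\cap\V=\C$ follows by splitting the defining $\EE$-triangles of the cotorsion pairs $(\U,\V)$ and $(\V,\U)$). Throughout I use that $(\C,\K)$ and $(\K,\C)$ are cotorsion pairs (proof of Theorem \ref{main1}), so $\EE(\K,\C)=0$; since $K\in\widetilde{\K}\subseteq\K$, this gives $\EE(K,Y)=0$ for all $Y\in\C$, in particular $\EE(K,C)=0$ and $\EE(K,C_0)=0$ whenever $C_0\in\mathcal{A}$. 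I also record $\EE(\mathcal{A},D)=0$: for $A\in\mathcal{A}$ the six-term exact sequence obtained from $\B(A,-)$ applied to $D\xrightarrow{s}C\xrightarrow{t}K\dashrightarrow\delta$ contains $\B(A,C)\xrightarrow{t\circ-}\B(A,K)\to\EE(A,D)\to\EE(A,C)$, where $\EE(A,C)=0$ by rigidity of $\mathcal{A}$ and $t\circ-$ is surjective because $t$ is a right $\mathcal{A}$-approximation; hence $\EE(A,D)=0$, so in particular $\EE(C,D)=0$. Finally $\delta\neq0$: otherwise the $\EE$-triangle splits, $C\cong D\oplus K$, and $K$ is a summand of $C\in\mathcal{A}$, contradicting $K\in\widetilde{\K}$ (this also shows $D\notin\mathcal{A}$, so in (b) we may assume $K\neq0$).

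A sub-step used in both parts: \emph{if $D=D_1\oplus D_2$ with $\EE(K,D_1)=0$, then $D_1\in\mathcal{A}$}. Indeed the $D_1$-component of $\delta$ then vanishes, so $\delta=(\iota_{D_2})_\ast\delta_2$ for some $\delta_2\in\EE(K,D_2)$; realizing $\delta_2$ and forming the pushout along the split monomorphism $\iota_{D_2}\colon D_2\to D$ via \cite[Proposition 1.20]{LN} yields an $\EE$-triangle $D_2\to D\oplus C_2\to C\dashrightarrow$ whose first map is a split monomorphism and an inflation, hence splits; cancelling $D_2$ gives $C\cong D_1\oplus C_2$, and $D_1\in\mathcal{A}$ since $\mathcal{A}$ is closed under summands. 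Dually, using the dual of \cite[Proposition 1.20]{LN} and a pullback along a split epimorphism: \emph{if $K=K_1\oplus K_2$ with $\EE(K_2,D)=0$, then $K_2\in\mathcal{A}$}.

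For (a), assume $K$ indecomposable, so $\End(K)$ is local. From $\EE(K,C)=0$ the connecting map $\End(K)\to\EE(K,D)$, $g\mapsto g^\ast\delta$, is surjective, so $\EE(K,D)$ is a cyclic right $\End(K)$-module, nonzero since $\delta\neq0$; a nonzero cyclic module over a local ring is indecomposable. Write $D=C_0\oplus X$ with $C_0$ the sum of the indecomposable summands of $D$ lying in $\mathcal{A}$ and $X$ the sum of the remaining ones; then $C_0\in\mathcal{A}$, $\EE(K,C_0)=0$, and no indecomposable summand $X'$ of $X$ lies in $\mathcal{A}$, so by the sub-step $\EE(K,X')\neq0$ for each such $X'$. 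Since $\EE(K,D)=\EE(K,X)=\bigoplus_{X'}\EE(K,X')$ is indecomposable, $X$ must be indecomposable; and $X\neq0$ (else $D\in\mathcal{A}$, forcing $\delta=0$), and $X\notin\C$ because $\EE(K,X)\neq0$ while $\EE(K,Y)=0$ for $Y\in\C$. Thus $D=C_0\oplus X$ has the required form. If moreover $t$ is right minimal, then $C_0=0$: otherwise, using $\EE(K,C_0)=0$ we have $\delta=(\iota_X)_\ast\delta_X$, and the same pushout construction shows that under the resulting isomorphism $C\cong C_0\oplus C_X$ the deflation $t$ annihilates the summand $C_0$, contradicting right minimality since $C_0\neq0$.

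For (b), assume $D$ indecomposable (so $K\neq0$), hence $\End(D)$ is local. From $\EE(C,D)=0$ the connecting map $\End(D)\to\EE(K,D)$, $f\mapsto f_\ast\delta$, is surjective, so $\EE(K,D)$ is a nonzero cyclic left $\End(D)$-module, hence indecomposable. If $K=K_1\oplus K_2$ with both summands nonzero, then $\EE(K,D)=\EE(K_1,D)\oplus\EE(K_2,D)$ as left $\End(D)$-modules, so one summand, say $\EE(K_2,D)$, vanishes; by the dual sub-step $K_2\in\mathcal{A}\subseteq\C$, contradicting $K\in\widetilde{\K}$. Hence $K$ is indecomposable. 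The step I expect to be most delicate is establishing $\EE(\mathcal{A},D)=0$ when $\mathcal{A}=\C$ (where it is not automatic and needs the interplay of the long exact sequence, rigidity and the approximation property of $t$), together with converting the indecomposability of the cyclic endomorphism-ring module $\EE(K,D)$ into an honest direct-summand statement about $C$ — for which one leans on the uniqueness of realizations of a fixed $\EE$-extension, i.e. on \cite[Proposition 1.20]{LN} and its dual; the remainder is Krull--Schmidt bookkeeping.
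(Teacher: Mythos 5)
Your proof is correct, but it follows a genuinely different route from the paper's. The paper argues by hand: it picks an indecomposable summand $X$ of $D$ not in $\C$, builds explicit morphisms of $\EE$-triangles between $X\to C\to K'$ and $D\to C\to K$, and reads off direct-summand relations from Remark \ref{useful}; part (b) is then settled by a dichotomy in the local ring $\End(D)$ (either $\beta_1\alpha_1$ is invertible, giving $K=K_1$, or it is nilpotent, forcing $t_1=0$). You instead package everything into the single $\EE$-extension $\delta$: the long exact sequences together with the approximation property give the two key vanishings $\EE(\mathcal A,D)=0$ and $\EE(K,\mathcal A)=0$, which make $\EE(K,D)$ a nonzero cyclic module over the local rings $\End(K)$ (on the right, for (a)) and $\End(D)$ (on the left, for (b)), hence indecomposable; the decompositions of $D$ and $K$ then decompose this module, and your sub-step — converting "a component of $\delta$ vanishes" into "the corresponding summand embeds as a direct summand of $C$" via the split pushout/pullback triangle of \cite[Proposition 1.20]{LN} — is exactly the bridge back to the categorical statement, and it also yields the right-minimality claim. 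Your version is more uniform ((a) and (b) are mirror images under the two module structures) and makes the mechanism transparent, at the cost of a small amount of module theory (cyclic modules over local rings are indecomposable, via Nakayama); the paper's version stays entirely inside diagram manipulations. The only shared soft spot is the degenerate case $K=0$ (both arguments implicitly assume $K\neq 0$ when deducing $\delta\neq 0$ and $D\notin\mathcal A$), which is a matter of the convention defining $\widetilde{\K}$ rather than a gap specific to your argument.
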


\begin{proof}
We only show the case when $C\in \C$, the others are similar. Since $K\in \widetilde{\K}$, $D$ does not belong to $\C$.

(a) Let $X$ be an indecomposable direct summand of $D$ that does not belong to $\C$, we have the following commutative diagram
$$\xymatrix{
X \ar[r]^-{x} \ar[d]_-{\alpha} &C \ar@{=}[d] \ar[r]^c &K' \ar[d]^a \ar@{-->}[r] &\\
D \ar[r]^s \ar[d]_-{\beta} &C \ar[d]^f \ar[r]^t &K \ar[d]^b \ar@{-->}[r] &\\
X \ar[r]^-{x} &C \ar[r]^c &K' \ar@{-->}[r] &.
}
$$
where $\beta\alpha=1_{X}$. This diagram implies that $K'$ is a direct summand of $K\oplus C$. If $K'$ is also a direct summand of $C$, the first row splits, hence $X\in \C$, a contradiction. This means $K'=K\oplus C'$ where $C'\in \C$. Hence we have the following commutative diagram
$$\xymatrix{
D \ar[r]^s \ar[d]_{\beta'} &C \ar[r]^t \ar[d]^c &K \ar[d]^-{\svecv{1_K}{0}} \ar@{-->}[r] &\\
X \ar[r]^-{x} \ar[d]_-{\alpha'} &C \ar@{=}[d] \ar[r]^c &K\oplus C' \ar[d]^-{\svech{1_K}{0}} \ar@{-->}[r] &\\
D \ar[r]^s \ &C \ar[r]^t &K \ar@{-->}[r] &
}
$$
This diagram implies that $D$ is a direct summand of $C\oplus X$. Since $D$ does not belong to $\C$, we have $D=C_0\oplus X$.

If $t$ is right minimal, from the diagram above get $c$ is an isomorphism, hence $\alpha'\beta'$ is also an isomorphism. Since $X$ is indecomposable, $D$ is also indecomposable and $D=X$.

\medskip

(b) If $t=0$, we have that $C$ is a direct summand of $D$, hence $D\simeq C\in\C$, a contradiction. We can assume $$C\xrightarrow{t=\svecv{t_1}{t_2}} K_1\oplus K_2=K$$ where $K_1$ is indecomposable and $t_1\neq 0$, then we have the following commutative diagram
$$\xymatrix{
D \ar[r]^-{s} \ar[d]_-{\alpha_1} &C \ar@{=}[d] \ar[r]^-{\svecv{t_1}{t_2}} &K_1\oplus K_2 \ar[d]^-{\svech{1_{K_1}}{0}} \ar@{-->}[r] &\\
D_1 \ar[r] \ar[d]_{\beta_1} \ &C \ar[r]^{t_1} \ar[d] &K_1 \ar[d]^-{\svecv{1_{K_1}}{0}} \ar@{-->}[r] &\\
D \ar[r]^-{s} &C \ar[r]^-{\svecv{t_1}{t_2}} &K_1\oplus K_2 \ar@{-->}[r] &
}
$$
%If $C=0$, then $\alpha_1\beta_1$ is invertable, hence $\alpha_1$ is an isomorphism, which implies $\svecv{1_{K_1}}{0}$ is an isomorphism. Hence $K_2=0$ and $K=K_1$ is indecomposable.\\
%If $C=0$
If $\beta_1\alpha_1$ is invertible, then $D_1\simeq D\oplus D_0$. By (a) we have $D_1\simeq X_1\oplus C_0$ where $X$ is indecomposable and $C_0\in \C$, hence $D$ is a direct summand of $X_1\oplus C_0$, which implies $X_1\simeq D$. Then $D_1\simeq D\oplus C_0$. Now we have the following commutative diagram.
$$\xymatrix{
D \oplus C_0 \ar[r]^-{\left(\begin{smallmatrix}
s&0\\
0&1_{C_0}
\end{smallmatrix}\right)} \ar[d]_{\gamma_1}^{\simeq} &C\oplus C_0 \ar[r]^-{\left(\begin{smallmatrix}
t_1&0\\
t_2&0
\end{smallmatrix}\right)} \ar[d] &K_1\oplus K_2 \ar[d]^-{\eta_1} \ar@{-->}[r] &\\
D_1 \ar[r] \ar[d]_{\gamma_2}^{\simeq} \ &C \ar[r]^{t_1} \ar[d] &K_1 \ar[d]^-{\eta_2} \ar@{-->}[r] &\\
D \oplus C_0 \ar[r]_-{\left(\begin{smallmatrix}
s&0\\
0&1_{C_0}
\end{smallmatrix}\right)}  &C\oplus C_0 \ar[r]_-{\left(\begin{smallmatrix}
t_1&0\\
t_2&0
\end{smallmatrix}\right)}  &K_1\oplus K_2 \ar@{-->}[r] &
}
$$
where $\gamma_2\gamma_1=1_{D\oplus C_0}$, then $1_K-\eta_2\eta_1$ factors through $C\oplus C_0$, hence $K$ is a direct summand of $K_1\oplus C\oplus C_0$. Since $K\in \widetilde{\K}$, we have $K=K_1$, hence it is indecomposable.
If $\beta_1\alpha_1$ is not invertible, since $\End_{\B}(D)$ is local, there is a natural number $n$ such that $(\beta_1\alpha_1)^n=0$, this implies $${0=\left(\begin{smallmatrix}
1_{K_1}&0\\
0&0
\end{smallmatrix}\right)}^n\svecv{t_1}{t_2}={\left(\begin{smallmatrix}
1_{K_1}&0\\
0&0
\end{smallmatrix}\right)}\svecv{t_1}{t_2}=\svecv{t_1}{0},$$ hence $t_1=0$, a contradiction.
\end{proof}

The following lemma plays an important role in the rest of the paper.

\begin{lem}\label{mainlem}
If $\B/\V$ is abelian and $\widetilde{\U}\neq 0$, then any indecomposable object $K\in \widetilde{\U}$ admits the following two $\EE$-triangles:
\begin{itemize}
\item[(a)] $\xymatrix{K' \ar[r] &P \ar[r] &K \ar@{-->}[r] &}$,
\item[(b)] $\xymatrix{K \ar[r] &I \ar[r] &K'' \ar@{-->}[r] &}$
\end{itemize}
where $P,I \in \mathcal P \cap \mathcal I$ and $K'$ and $K''\in \widetilde{\U}$ are indecomposables.
\end{lem}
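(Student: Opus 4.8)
The setup is self-dual: passing to $\B^{\op}$ exchanges the two cotorsion pairs $(\U,\V)$ and $(\V,\U)$ (keeping each component), preserves ``$\B/\V$ abelian'', $\mathcal P\cap\mathcal I$ and the class $\widetilde\U$, and carries an $\EE$-triangle of the form (b) to one of the form (a); so it suffices to produce (b). Fix an indecomposable $K\in\widetilde\U$. Since $\B$ is Krull--Schmidt with enough injectives, let $K\xrightarrow{\,j\,}I_0$ be the injective envelope of $K$ (a left minimal left $\mathcal I$-approximation) and let $K\xrightarrow{\,j\,}I_0\to\Sigma K\overset{\xi}{\dashrightarrow}$ be the associated $\EE$-triangle; note $I_0\in\mathcal I\subseteq\C$. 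As $j$ is left minimal and $K\in\widetilde\K$ is indecomposable, the dual of Lemma \ref{app}(a) gives that $\Sigma K$ is indecomposable with $\Sigma K\notin\C$; also $\xi\neq0$ (otherwise $K$ would be a summand of $I_0\in\C$) and $\Sigma K\notin\V$ (otherwise $\xi\in\EE(\V,\U)=0$).

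I would first check that $\Sigma K\in\U$, so that $\Sigma K\in\widetilde\U$ is indecomposable. Using the cotorsion pair $(\U,\V)$, pick an $\EE$-triangle $V''\to U''\xrightarrow{\,q\,}\Sigma K\dashrightarrow$ with $U''\in\U$, $V''\in\V$; then $q$ is a right $\U$-approximation, and after splitting off the summands of $U''$ that lie in $\V$ (all of which lie in $\C$) we may take $U''\in\widetilde\U$. Every morphism from an object of $\U$ into an object of $\V$ factors through $\C$, and from this --- exactly as in the proof that $\U/\C$ is abelian --- one sees that $\overline q$ is an epimorphism in $\B/\V$; Lemma \ref{serre}(b) then yields $\Sigma K\in\U$. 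This is the only point where abelianness of $\B/\V$ is used; the same argument, applied to a left $\U$-approximation, shows in the next paragraph that $L\in\U$.

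Next I would realise $\Sigma K$ as the deflation-target of a projective. Since $\U$ is a Frobenius extriangulated category with projective--injectives $\C$, and $\B/\V$ is abelian, $\U/\C$ is a semisimple abelian category whose shift $[1]$ is the cosyzygy; hence the $\EE$-triangle $K\to I_0\to\Sigma K\dashrightarrow$ with $I_0\in\C$ gives $\Sigma K\cong K[1]$ in $\U/\C$, so $K$ is simple there and $D:=\End_{\U/\C}(K)$ is a division ring. Let $L\to P_1\to\Sigma K\overset{\xi'}{\dashrightarrow}$ be a projective cover of $\Sigma K$ in $\B$, with $P_1\in\mathcal P\subseteq\C$ and $P_1\to\Sigma K$ right minimal. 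By Lemma \ref{app}(a), $L$ is indecomposable with $L\notin\C$; by the variant of the previous step $L\in\widetilde\U$; and in $\U/\C$ one has $L\cong\Sigma K[-1]\cong K$. Since $L$ and $K$ are indecomposable objects of $\U$ outside $\C$, Corollary \ref{zero-iso} upgrades this to an isomorphism $L\cong K$ in $\B$, so we may rewrite the triangle as $K\to P_1\to\Sigma K\overset{\xi'}{\dashrightarrow}$ with $P_1\in\mathcal P$; again $\xi'\neq0$, else $\Sigma K$ would be a summand of $P_1\in\C$.

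To finish I would identify $I_0$ with $P_1$. The group $\EE(\Sigma K,K)$ may be computed inside $\U$ (it is extension-closed), where it equals $\Hom_{\U/\C}(\Sigma K,K[1])\cong\End_{\U/\C}(K)=D$ --- using $\Sigma K\cong K[1]$ in $\U/\C$ and that any endomorphism of $K$ factoring through $\C$ acts as $0$ on it, since $\EE(\U,\V)=0$ --- so it is one-dimensional over $D$. As $\xi,\xi'$ are both nonzero, $\xi'=\widetilde a_{\ast}\xi$ for some $\widetilde a\in\End_\B(K)$ lifting a unit of $D$; $\End_\B(K)$ is local, so $\widetilde a$ is an isomorphism. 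Since $\widetilde a$ is an isomorphism, $\widetilde a_{\ast}\xi$ is realised by an $\EE$-triangle with the same middle term $I_0$; as $\widetilde a_{\ast}\xi=\xi'$ and $K\to P_1\to\Sigma K\dashrightarrow$ also realises $\xi'$, uniqueness of realisations gives $I_0\cong P_1\in\mathcal P\cap\mathcal I$. Thus $K\xrightarrow{\,j\,}I_0\to\Sigma K\dashrightarrow$ is the required $\EE$-triangle (b) (with $K'':=\Sigma K$), and (a) follows by duality. I expect the step $\Sigma K\in\U$ (and its syzygy variant) to be the main obstacle, as it is the sole point invoking ``$\B/\V$ abelian'' and needs the morphism-factoring bookkeeping for the two cotorsion pairs; everything after is formal.
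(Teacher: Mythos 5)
Your overall architecture is attractive and genuinely different from the paper's: you reduce everything to showing that the minimal injective copresentation $K\to I_0\to\Sigma K\overset{\xi}{\dashrightarrow}$ and a projective presentation $K\to P_1\to\Sigma K\overset{\xi'}{\dashrightarrow}$ realize two nonzero classes in $\EE(\Sigma K,K)$, which is one-dimensional over the division ring $\End_{\U/\C}(K)$, forcing $I_0\cong P_1\in\mathcal P\cap\mathcal I$. Granted that $\Sigma K$ and $L$ lie in $\U$, the downstream steps (semisimplicity of $\U/\C$, $\EE(\Sigma K,K)\cong\Hom_{\U/\C}(\Sigma K,\Sigma K)$, vanishing of the action of morphisms factoring through $\C$, locality of $\End_{\B}(K)$, uniqueness of realizations) all check out, and this endgame is slicker than the paper's.

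The gap is exactly at the step you yourself flag as the main obstacle, and it is not a technicality. You justify $\Sigma K\in\U$ by claiming that for the $(\U,\V)$-approximation triangle $V''\to U''\xrightarrow{q}\Sigma K\dashrightarrow$ the morphism $\overline q$ is an epimorphism in $\B/\V$, ``exactly as in the proof that $\U/\C$ is abelian''. No such statement is proved there, and the claim is false for a general target: if on some direct summand of $\B$ one has $\U=\V=\C$ cluster tilting, then $U''\in\V$, so $\overline q=0$ in $\B/\V$ while the target need not vanish there; if $\overline q$ were epic, Lemma \ref{serre}(b) would wrongly place every object in $\U$. For the specific object $\Sigma K$ the assertion ``$\overline q$ is epic in $\B/\V$'' is essentially \emph{equivalent} to $\Sigma K\in\U$ (if $\Sigma K\in\U$ then $\EE(\Sigma K,V'')=0$, so $q$ splits and $\overline q$ is epic; your argument runs the implication in the other direction), so invoking it is circular; nothing in your argument uses that $\Sigma K$ is the cone of $K\to I_0$ with $K\in\widetilde{\U}$. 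The same objection applies to the ``syzygy variant'' needed for $L\in\U$, without which Corollary \ref{zero-iso} cannot be applied to get $L\cong K$ in $\B$. Establishing precisely these membership statements is where the paper spends almost all of its effort (parts (I) and (II) of its proof: epic--monic factorizations of concrete morphisms in $\B/\V$, Corollary \ref{zero-iso}, and a case analysis showing a certain middle term lies in $\V$, then in $\C$, then in $\mathcal P$). As written, your proposal proves the lemma only modulo its hardest ingredient.
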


\begin{proof}
Let $A\in \widetilde{\U}$ be an indecomposable object, it admits an $\EE$-triangle $\xymatrix{K_A \ar[r] &C_A \ar[r] &A \ar@{-->}[r] &}$ where $C_A\in \C$ and $K_A\in \U$, by assumption we get $K_A\notin \C$. Let $K\in \widetilde{\U}$ be an indecomposable direct summand of $K_A$, let $K_A=K\oplus K_1$. Then we have the following commutative diagram.
$$\xymatrix{
K_A \ar@{=}[d] \ar[r] &C_A \ar[d] \ar[r] &A \ar[d]^{a_1} \ar@{-->}[r] &\\
K\oplus K_1 \ar[d]_{\svech{1}{0}} \ar[r] &I_{K_A} \ar[d] \ar[r] &\Sigma K_A \ar[d]^{a_2} \ar@{-->}[r] &\\
K \ar[r] & I_K \ar[r] &\Sigma K \ar@{-->}[r] &
}
$$
where $\beta$ is split epimorphism and $I_{K_A},I_K\in \mathcal I$. By Lemma \ref{app}, $K$ admits an $\EE$-triangle $$\xymatrix{
K'\oplus C_1' \ar[r]  &C'\ar[r]  &K  \ar@{-->}[r] &}
$$
where $C',C_1'\in \C$ and $K'\in \widetilde{\U}$ is indecomposable. By the dual of Lemma \ref{app}, $K'$ admits an $\EE$-triangle $\xymatrix{K' \ar[r] &I_{K'} \ar[r]  &X\oplus I' \ar@{-->}[r] &}$ where $X$ is indecomposable and $I_{K'},I'\in \mathcal I$. Then X admits an $\EE$-triangle $\xymatrix{K_X \ar[r] &I_{K'} \ar[r]  &X \ar@{-->}[r] &}$. Hence we have the following commutative diagram
$$\xymatrix{
K_X \ar[r] \ar[d] &I_{K'} \ar[r] \ar[d] &X \ar[d]^{\svecv{1}{0}} \ar@{-->}[r] &\\
K' \ar[r] \ar[d] &I_{K'} \ar[r] \ar@{=}[d] &X\oplus I' \ar[d]^{\svech{1}{0}} \ar@{-->}[r] &\\
K_X \ar[r] &I_{K'} \ar[r]^{j_X} &X \ar@{-->}[r] &
}
$$
Now $K_X$ is a direct summand of $K'\oplus I_{C'}$, hence $K_X\in \U$.
Since $C_1'$ also admits an $\EE$-triangle $\xymatrix{C_1' \ar[r] &I_1'\ar[r] &\Sigma C_1'\ar@{-->}[r] &}$ where $I_1'\in \mathcal I$, let $I'\oplus \Sigma C_1'=D$,then we get the following commutative diagram
$$\xymatrix{
K'\oplus C_1' \ar@{=}[d] \ar[r]  &C'\ar[r] \ar[d] &K \ar[d]^{\svecv{k_X}{k_2}}  \ar@{-->}[r] &\\
K'\oplus C_1' \ar[r] &I_{K'}\oplus I_1' \ar[r] &X\oplus D\ar@{-->}[r] &
}
$$
where morphism $k_X$ does not factors through $\mathcal I$, otherwise $\svecv{k_1}{k_2}$ factors through $\mathcal I$ and then $K'$ is a direct summand of $C'$, a contradiction. Then we have the following commutative diagram
$$\xymatrix{
K_A \ar@{=}[d] \ar[r] &C_A \ar[d] \ar[r] &A \ar[d]^{a_1} \ar@{-->}[r]^-{\delta} &\\
K\oplus K_1 \ar[d]_{\svech{1}{0}} \ar[r] &I_{K_A} \ar[d] \ar[r] &\Sigma K_A \ar[d]^{a_2} \ar@{-->}[r] &\\
K \ar[r] \ar[d]_{k_X} & I_K \ar[r] \ar[d] &\Sigma K \ar[d]^{a_3} \ar@{-->}[r] &\\
X \ar[r] &I_X \ar[r]^{j} &\Sigma X \ar@{-->}[r]^-{\sigma} &
}
$$
where $I_X\in \mathcal I$. Let $a=a_3a_2a_1$, we claim that $a$ does not factors through $j$. Otherwise, $a$ factors through $j$, then $(k_X,0)$ factors through $C_A$. Hence it factors through $j_X$, this means $k_1$ factors through $\mathcal I$, a contradiction.\\
Since $a^*\sigma={k_x}_*\delta$, we have the following commutative diagram
$$\xymatrix{
K_A \ar[d]_{k_X} \ar[r] &C_A \ar[r]^{c_A} \ar[d]_c &A \ar@{=}[d] \ar@{-->}[r]^-{\delta} &\\
X \ar[r]^g \ar@{=}[d] &B \ar[d] \ar[r]^f &A \ar[d]^a \ar@{-->}[r]^-{a^*\sigma} & \\
X \ar[r] &I_X \ar[r]_{j} &\Sigma X \ar@{-->}[r]^-{\sigma} &
}
$$
(I) We show $B\in \C$.\\
Let $\overline f: B\xrightarrow{\overline l} D\xrightarrow{\overline k} A$ be an epic-monic factorization of $\overline f$ in $\B/\V$, then we have $f-kl:B\xrightarrow{v_1} V_A\xrightarrow{v_2} A$ where $V_A\in \V$. Since $\E(V_A,K_A)=0$, we have $v_2:V_A\xrightarrow{v_3} C_A\xrightarrow{c_A}$. Hence $f-lk=v_2v_1=c_Av_3v_1$. By Lemma \ref{serre}, $D\in \U$. Then by Corollary \ref{zero-iso}, $D\in \C$ or $D\simeq A$.

If $D\simeq A$, the $\overline f$ is an epimorphism. If $\overline f$ is also a monomorphism, it is an isomorphism since $\B/\V$ is abelian. Since $A\in \widetilde{\U}$ is indecomposable, we get $f$ is a split epimorphism, which implies $a$ factors through $j$, a contradiction. Let $\overline r:R\to B$ be the kernel of $\overline f$, since $\overline {fg}=0$, there is a morphism $s:X\to R$ such that $\overline {rs}=\overline g$.
On the other hand, we have the following commutative diagram
$$\xymatrix{
&R \ar[r]^{c_1} \ar[d]_r &V \ar[d]^{c_2}\\
X \ar[r]_g &B \ar[r]_f &A \ar@{-->}[r] &
}
$$
where $V\in \V$. Since $\EE(V,K_A)=0$, we have $c_2:V\xrightarrow{c_3} C_A \xrightarrow{c_A} A$. Hence $f(r-cc_3c_1)=0$, then there is a morphism $t:R\to X$ such that $\overline {gt}=\overline r$. Hence $\overline {rst}=\overline {gt}=\overline {r}$, which implies $\overline {st}=\overline {1_{R}}$. Since $X$ is indecomposable, $\overline s$ is an isomorphism, hence $\overline g$ is a monomorphism. This implies $\overline {k_X}=0$, $k_X$ factors through $\V$, then it factors through $C_A$. Hence $A$ is a direct summand of $B$, which implies $a$ factors through $j$, a contradiction.

If $D\in \C$, then we have $k:D\xrightarrow{d} C_A\xrightarrow{c_A} A$, hence we have a morphism $cd:D\to B$ such that $fcd=k$. This implies $f(1_B-cdl-cv_3v_1)=0$, hence there is a morphism $m:B\to X$ such that $gm=1_B-cdl-cv_3v_1$. Hence $\overline {1_B}=\overline {gm}$ and $\overline g$ is a split epimorphism. Since $X$ is indecomposable, $\overline g:X\xrightarrow{\simeq} B$ in $\B/\V$ if $B$ does not belong to $\V$. This means $g$ is a split monomorphism. Then we still have $A$ is direct summand of $B$ and then $a$ factors through $j$, a contradiction. Hence $B\in \V$. From the following commutative diagram
$$\xymatrix{
K_A \ar[d]_{k_X} \ar[r] &C_A \ar[r]^{c_A} \ar[d] &A \ar@{=}[d] \ar@{-->}[r] &\\
X \ar[r]^g \ar[d] &B \ar[d] \ar[r]^f &A \ar@{=}[d] \ar@{-->}[r] & \\
K_A  \ar[r] &C_A \ar[r]_{c_A}  &A \ar@{-->}[r] &
}
$$
we get $K_A$ is a direct summand of $X\oplus C_A$, since $K_A\notin \C$, $X$ is direct summand of $K_A$, hence $X\in \U$ and $B\in \U\cap \V=\C$.\\[2mm]
%Now $f$ becomes a right $\C$-approximation, then $X\in \K$. \\[2mm]
(II) We show $B\in \mathcal P$.

If $f$ factors through an object $P\in \mathcal P$, then by Remark \ref{useful}, $B$ is a direct summand of $X\oplus P$, since $X\notin \C$, $B$ is a direct summand of $P$, hence $B\in \mathcal P$. We will show $f$ factors through $\mathcal P$.

For an object $B$, there exists an $\EE$-triangle $\xymatrix{\Omega B \ar[r] &P_B \ar[r] &B \ar@{-->}[r]^-{\delta_B} &}$ where $P_B\in \mathcal P$, then we have the following commutative diagram
$$\xymatrix{
&\Omega B \ar@{=}[r] \ar[d]_{y_1} &\Omega B \ar@{=}[r] \ar[d]^{f'} &\Omega B \ar[d]\\
K_X \ar[r]^y \ar@{=}[d] &Y \ar[r]^{y_2} \ar[d]_{y_3} &\Omega A \ar[r]^p \ar[d]^{h'} &P_B \ar[d] \ar[r] &A \ar@{=}[d] \ar@{-->}[r]^-{\delta_A} &\\
K_X \ar[r]^{i_X} &I_{C'} \ar[r] &X \ar[r] &B \ar@{-->}[d]^-{\delta_B} \ar[r]^f &A\\
&&&&&
}
$$
where $f'_*\delta_B=f^*\delta_A$. Hence we have the following diagram
$$\xymatrix{
\Omega B \ar[r]^{pf'} \ar[d]_{f'} &P_B \ar[d] \ar[r] &B \ar[d]^f \ar@{-->}[r]^-{\delta_B} &\\
\Omega A \ar[r]  &P_B  \ar[r] &A \ar@{-->}[r]^-{\delta_A} &
}
$$
Moreover, $\overline {y_1}$ is an epimorphism and by Lemma \ref{app}, $\Omega A$ is indecomposable in $\B/\V$.

If $\overline {f'}=0$, then $f'$ factors through $pf'$, hence $f$ factors though $P_B$.

If $\overline {f'}\neq 0$, then $\overline {y_2}\neq 0$. Consider its epic-monic factorization $Y \xrightarrow{\overline e} E\xrightarrow{\overline m} \Omega A$. We have $\overline {ey}=0$, then there is an object $V_Y\in \V$ such that $ey:K_X\xrightarrow{s_1} V_Y \xrightarrow{s_2} E$. Since $X\in \U$, $\EE(X,V_Y)=0$, then there is a morphism $i:I_{C'}\to V_Y$ such that $s_1=ii_X$, we can replace $e$ by $e-s_2iy_3=e'$. then $e'y=0$, hence there is a morphism $m':\Omega A \to E$ such that $m'y_2=e'$. Since $\overline {m'me}=\overline {e}$, we have $\overline {m'm}=\overline {1_E}$. Since the endomorphism (in $\B/\V$) algebra of $\Omega A$ is local, then either $\overline {mm'}$ or $\overline{1_{\Omega A}}-\overline {mm'}$ is invertible. In the first case $\overline m$ is invertible, then $\overline {f'}$ is epic, hence $\overline {h'}=0$. This implies $h'$ factor through $P_B$, then A is a direct summand of $B$, a contradiction. In the second case, we have $(\overline {1_{\Omega A}}-\overline {mm'})\overline{f'}=0$, hence $\overline{f'}=0$, a contradiction. This implies $f$ factors through $\mathcal P$.

Now we prove that an indecomposable object $K\in \widetilde{\U}$ admits an $\EE$-triangles $\xymatrix{K' \ar[r] &P \ar[r] &K \ar@{-->}[r] &}$ where $P\in \mathcal P$ and $K'\in \widetilde{\U}$. Dually we can show it admits an $\EE$-triangle $\xymatrix{K \ar[r] &I \ar[r] &K'' \ar@{-->}[r] &}$ where $I\in \mathcal I$ and $K'' \in \widetilde{\U}$.

Now by this argument $K'$ also admits an $\EE$-triangles $\xymatrix{K' \ar[r] &I' \ar[r] &K''' \ar@{-->}[r] &}$ where $I'\in \mathcal I$ and $K''' \in \widetilde{\U}$. Then we have the following commutative diagram
$$\xymatrix{
K' \ar[r] \ar@{=}[d] &P \ar[d] \ar[r] &K \ar[d] \ar@{-->}[r] &\\
K' \ar[r] \ar@{=}[d] &I' \ar[r] \ar[d] &K''' \ar[d] \ar@{-->}[r] &\\
K' \ar[r] &P \ar[r] &K \ar@{-->}[r] &
}
$$
Then $P$ is a direct summand of $K'\oplus I'$, since $K\in \widetilde{\U}$, $P$ is a direct summand of $I'$. Hence $P\in \mathcal P\cap \mathcal I$. Dually we have $I\in \mathcal P\cap \mathcal I$.
\end{proof}

We can prove the same results for $\V$, hence we have the following corollary.

\begin{cor}\label{maincor}
If $\B/\V$ and $\B/\U$ are abelian and $\widetilde{\K}\neq 0$, then any indecomposable object $K\in \widetilde{\K}$ admits the following two $\EE$-triangles:
\begin{itemize}
\item[(a)] $\xymatrix{K' \ar[r] &P \ar[r] &K \ar@{-->}[r] &}$,
\item[(b)] $\xymatrix{K \ar[r] &I \ar[r] &K'' \ar@{-->}[r] &}$
\end{itemize}
where $P,I \in \mathcal P \cap \mathcal I$ and $K',K''\in \widetilde{\K}$ are indecomposables.
\end{cor}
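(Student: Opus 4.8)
The plan is to deduce this directly from Lemma~\ref{mainlem} and its $\V$-analogue (``the same results for $\V$''), the only real content of the reduction being a short Krull--Schmidt argument. First I would record the inclusions $\widetilde{\U}\subseteq\widetilde{\K}$ and $\widetilde{\V}\subseteq\widetilde{\K}$, which are immediate from $\U,\V\subseteq\K$ together with the definitions. Hence it suffices to prove that every indecomposable object $K\in\widetilde{\K}$ already lies in $\widetilde{\U}$ or in $\widetilde{\V}$: then one of the two ``half'' versions of the statement applies verbatim to $K$ and produces the desired $\EE$-triangles, whose end terms lie in $\widetilde{\U}$ (resp.\ $\widetilde{\V}$), hence in $\widetilde{\K}$.

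To see the reduction, let $K\in\widetilde{\K}$ be indecomposable. By the definition $\K=\U+\V$, the object $K$ is a direct summand of some $U\oplus V$ with $U\in\U$ and $V\in\V$. Decomposing $U$ and $V$ into indecomposables and using that $\B$ is Krull--Schmidt, the indecomposable $K$ is isomorphic to a direct summand of $U$ or of $V$; since $\U$ and $\V$ are closed under direct summands, this gives $K\in\U$ or $K\in\V$. As $K$ is indecomposable and has no direct summand in $\C$, we have $K\notin\C$, so in fact $K\in\widetilde{\U}$ or $K\in\widetilde{\V}$ (these alternatives are mutually exclusive, since an object lying in both $\U$ and $\V$ lies in $\C$).

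In the case $K\in\widetilde{\U}$ the hypothesis $\widetilde{\U}\neq 0$ required by Lemma~\ref{mainlem} is automatic, being witnessed by $K$, and $\B/\V$ is abelian by assumption; so Lemma~\ref{mainlem} supplies $\EE$-triangles $K'\to P\to K\dashrightarrow$ and $K\to I\to K''\dashrightarrow$ with $P,I\in\mathcal P\cap\mathcal I$ and $K',K''\in\widetilde{\U}\subseteq\widetilde{\K}$ indecomposable, as wanted. In the case $K\in\widetilde{\V}$ one argues symmetrically, invoking the $\V$-version of Lemma~\ref{mainlem}, which requires $\B/\U$ abelian --- again an assumption --- and $\widetilde{\V}\neq 0$, witnessed by $K$; this yields the required $\EE$-triangles with $K',K''\in\widetilde{\V}\subseteq\widetilde{\K}$.

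Since essentially all the technical work has already been carried out in Lemma~\ref{mainlem}, there is no serious obstacle here; the only points that need care are that $\K$ is closed under direct summands in a Krull--Schmidt category, so that the indecomposable summand $K$ of $U\oplus V$ can be sorted unambiguously into $\U$ or $\V$, and that the $\V$-version of Lemma~\ref{mainlem} is legitimately obtained by interchanging the roles of $\U$ and $\V$ throughout, which is permitted because the standing hypotheses (that $(\U,\V)$ and $(\V,\U)$ are both cotorsion pairs, and $\C=\U\cap\V\supseteq\mathcal P\cap\mathcal I$) are symmetric in $\U$ and $\V$.
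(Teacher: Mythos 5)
Your proposal is correct and matches the paper's intent: the paper derives this corollary immediately from Lemma~\ref{mainlem} together with its $\U\leftrightarrow\V$ symmetric version, leaving implicit exactly the Krull--Schmidt observation you spell out, namely that an indecomposable object of $\widetilde{\K}$ with $\K=\U+\V$ must already lie in $\widetilde{\U}$ or $\widetilde{\V}$. No gaps; your write-up just makes the reduction explicit.
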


Let $\widehat{\K}$ be the subcategory of objects which are direct sums of objects in $\widetilde{\K}$ and $\mathcal P\cap \mathcal I$. Let $\h_{\C}=\CoCone(\C,\C)$, note that $\h_{\C}/\C$ is the heart of cotorsion pair $(\C,\K)$. Then we have the following corollary.

\begin{lem}\label{maincor1}
If any indecomposable object $K\in \widetilde{\K}$ admits the following two $\EE$-triangles:
\begin{itemize}
\item[(a)] $\xymatrix{K' \ar[r] &P \ar[r] &K \ar@{-->}[r] &}$,
\item[(b)] $\xymatrix{K \ar[r] &I \ar[r] &K'' \ar@{-->}[r] &}$
\end{itemize}
where $P,I \in \mathcal P \cap \mathcal I$ and $K',K''\in \widetilde{\K}$ are indecomposables, then
$\B/(\mathcal P\cap \mathcal I)=\widehat{\K}/(\mathcal P\cap \mathcal I)\oplus \h_{\C}/(\mathcal P\cap \mathcal I)$.
\end{lem}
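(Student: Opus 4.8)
The plan is to verify the two defining properties of a direct sum of extriangulated subcategories: in $\B/(\mathcal P\cap\mathcal I)$, (I) every object is a direct sum of (the images of) an object of $\widehat\K$ and an object of $\h_\C$; and (II) $\Hom(X,Y)=\Hom(Y,X)=0$ whenever $X\in\widehat\K$, $Y\in\h_\C$. Observe first that (I) together with (II) already forces each of $\widehat\K/(\mathcal P\cap\mathcal I)$, $\h_\C/(\mathcal P\cap\mathcal I)$ to be extension-closed in $\B/(\mathcal P\cap\mathcal I)$, hence an extriangulated subcategory by \cite[Remark 2.18]{NP}: in a conflation $U\to W\to U'$ with $U,U'$ in one piece, (I) and (II) force the component of $W$ in the other piece to admit no nonzero maps to or from $U$ and $U'$, so it vanishes. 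I shall use freely that $\mathcal P\cup\mathcal I\subseteq\C$ (projectives lie in both left terms and injectives in both right terms of $(\U,\V),(\V,\U)$), hence $\mathcal P\cap\mathcal I\subseteq\C\subseteq\h_\C$ and $\mathcal P\cap\mathcal I\subseteq\widehat\K$; that $\h_\C=\CoCone(\C,\C)$, equivalently $\Cone(\C,\C)$, and $\widetilde\K$ are closed under direct summands (for $\h_\C$ this is the argument of Lemma \ref{direct summand} with rigidity of $\C$ in place of $\EE(\U,\C)=0$); and that $\widehat\K\cap\h_\C=\mathcal P\cap\mathcal I$, since if $K\in\widetilde\K$ admits a conflation $K\to C_0\to C_1$ with $C_i\in\C$, then its class lies in $\EE(C_1,K)=0$, so $K$ is a summand of $C_0\in\C$ and thus $K=0$. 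In particular the two pieces meet only in $0$, so the decomposition in (I) is unambiguous.

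For (II) I may assume, since $\EE(\mathcal P\cap\mathcal I,-)=0=\EE(-,\mathcal P\cap\mathcal I)$, that $X\in\widetilde\K$ is indecomposable and $Y\in\h_\C$. The first step is to prove $\EE(X,Y)=0$: picking a conflation $Y\to C_0\to C_1$ with $C_0,C_1\in\C$, the long exact sequence together with $\EE(X,C_0)=0$ (as $X\in\K$, $C_0\in\C$) realises $\EE(X,Y)$ as a quotient of $\Hom(X,C_1)$ under a connecting map; by the hypothesis conflation $X\to I\to X''$ and $\EE(X'',C_1)=0$, every map $X\to C_1$ factors through $I\in\mathcal P\cap\mathcal I$, and since $I$ is injective the connecting map annihilates it, so $\EE(X,Y)=0$. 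The crux is that $X''$ again lies in $\widetilde\K$ — exactly what the hypothesis guarantees — so the same computation gives $\EE(X'',Y)=0$, whence in the long exact sequence of $X\to I\to X''$ the map $\Hom(I,Y)\to\Hom(X,Y)$ is surjective, i.e. every $X\to Y$ factors through $I\in\mathcal P\cap\mathcal I$. Dually, using the hypothesis conflation (a), the presentation $Y\in\Cone(\C,\C)$ and $\EE(\C,\K)=0$, one gets $\EE(Y,X)=0$ and that every $Y\to X$ factors through $\mathcal P\cap\mathcal I$.

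For (I), given $M\in\B$, use the cotorsion pair $(\K,\C)$ to pick a conflation $M\xrightarrow{a}C^M\xrightarrow{b}K^M\dashrightarrow$ with $C^M\in\C$, $K^M\in\K$, and write $K^M\cong\widetilde K\oplus C'$ with $\widetilde K\in\widetilde\K$, $C'\in\C$ (if $\widetilde K=0$ then $M\in\CoCone(\C,\C)=\h_\C$ and we are done). By (II) the component $C^M\to\widetilde K$ of $b$ factors through $\mathcal P\cap\mathcal I$, so in $\B/(\mathcal P\cap\mathcal I)$ the deflation $\overline b$ factors through the split monomorphism $C'\hookrightarrow\widetilde K\oplus C'$; chasing this — and using the hypothesis conflation (a) to identify a syzygy of $\widetilde K$ with an object $\widetilde K_{-1}\in\widetilde\K$ — reduces the conflation to a conflation $M'\to C^M\to C'\dashrightarrow$, whence $M'\in\CoCone(\C,\C)=\h_\C$, together with a conflation in $\B/(\mathcal P\cap\mathcal I)$ relating $M$, $M'$ and $\widetilde K_{-1}$. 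Since $\EE(\widetilde\K,\h_\C)=0$ by (II), this last conflation splits, giving $M\cong M'\oplus\widetilde K_{-1}$ in $\B/(\mathcal P\cap\mathcal I)$ with $M'\in\h_\C$ and $\widetilde K_{-1}\in\widetilde\K\subseteq\widehat\K$; this is (I). Together with (II) and the preliminary remarks, this yields $\B/(\mathcal P\cap\mathcal I)=\widehat\K/(\mathcal P\cap\mathcal I)\oplus\h_\C/(\mathcal P\cap\mathcal I)$.

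The hard part is (II) — specifically the vanishing $\EE(\widetilde\K,\h_\C)=0=\EE(\h_\C,\widetilde\K)$, which underpins both the factorisations in (II) and the splitting in (I). It is here that the full strength of the hypothesis is used: one must know not merely that a $\widetilde\K$-object has a $\mathcal P\cap\mathcal I$-syzygy and a $\mathcal P\cap\mathcal I$-cosyzygy, but that these stay inside $\widetilde\K$, for otherwise the bootstrap from "$X\to C_1$ factors through $\mathcal P\cap\mathcal I$" to "$X\to Y$ factors through $\mathcal P\cap\mathcal I$" breaks down. A secondary technical nuisance is the diagram chase in (I) producing the split conflation: one must carefully track how the deflation $b$ degenerates modulo $\mathcal P\cap\mathcal I$ and invoke the extriangulated manipulations (pullbacks of deflations, the dual of Proposition 1.20, and the fact that $gf$ a deflation forces $g$ a deflation) to extract the conflation $M'\to C^M\to C'$ and the one relating $M$, $M'$ and $\widetilde K_{-1}$.
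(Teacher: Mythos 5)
Your architecture is the same as the paper's (prove the two Hom-vanishings modulo $\mathcal P\cap\mathcal I$, then decompose each object), and your treatment of the hard direction $\widetilde{\K}\to\h_\C$ is correct and in fact cleaner than the paper's explicit diagram chase: realizing $\EE(X,Y)$ as the image of the connecting map $\Hom(X,C_1)\to\EE(X,Y)$, factoring $X\to C_1$ through $I$ via $\EE(X'',C_1)=0$, and killing the result by $\EE(I,-)=0$ (note: this uses that $I$ is \emph{projective}, not injective, as you wrote) is a nice bootstrap that genuinely exploits $X''\in\widetilde{\K}$. The problem is the other direction. You dispose of $\h_\C\to\widetilde{\K}$ ``dually'' by invoking the identification $\h_\C=\CoCone(\C,\C)=\Cone(\C,\C)$, which you state you ``use freely'' but never prove. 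This is a genuine gap: in an extriangulated category the six-term exact sequence attached to $Y\to C_0\to C_1$ ends at $\EE(Y,-)$, so you cannot convert a $\CoCone(\C,\C)$-presentation into a $\Cone(\C,\C)$-presentation (there is no surjectivity of $\EE(C_0,-)\to\EE(Y,-)$ and no $\EE^2$ to control the obstruction), and the cotorsion pair $(\C,\K)$ only supplies $Y\in\Cone(\K,\C)$, not $\Cone(\C,\C)$. The good news is that this direction does not need any cone presentation at all --- it is the easy one, and the paper does it directly: given $f:Y\to X$ with $Y\to C_0\to C_1$, the vanishing $\EE(C_1,X)=0$ (from $\EE(\C,\K)=0$) extends $f$ to $C_0\to X$, and $\EE(C_0,K')=0$ lifts that through $P\to X$; then $\EE(Y,X)=0$ follows from this factorization applied to $Y\to X''$ together with the connecting map of $X\to I\to X''$ and $\EE(Q,X)=0$ for $Q\in\mathcal P$. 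You should replace your ``dual'' paragraph by this argument.

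The gap propagates into step (I). The splitting you need there is of a conflation relating $M$, $M'$ and $\widetilde K_{-1}$, and the orientation matters: the natural output of the chase (compose the deflations $C^M\to\widetilde K\oplus C'\to C'$ to get $M\to M'\to\widetilde K$, then pull back along $P\to\widetilde K$ and use $\EE(P,M)=0$) is a conflation $\widetilde K_{-1}\to M\oplus P\to M'$, whose class lives in $\EE(M',\widetilde K_{-1})=\EE(\h_\C,\widetilde{\K})$ --- exactly the group whose vanishing currently rests on the unproved $\Cone(\C,\C)$ claim, and not the group $\EE(\widetilde{\K},\h_\C)$ you cite. Once the previous paragraph's fix is in place both vanishings are available and the splitting goes through, but as written the justification points at the wrong $\EE$-group and the diagram chase itself is only gestured at. For comparison, the paper avoids this orientation issue by starting from the other conflation of the cotorsion pair, $X\to K^X\oplus C^0\to C^X$, producing $K_0\to Y\to X$ with $Y\in\h_\C$ and splitting off $X$ as a direct summand of $Y\oplus I$ via Remark \ref{useful}; that route only uses the Hom-factorization $\widetilde{\K}\to\h_\C$ plus injectivity of $I$, and you may find it easier to make rigorous than your pullback chase.
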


\begin{proof}
Let $A,K$ be indecomposable objects where $A\in \h$ and $K\in \widetilde{\K}$, then $A$ admits an $\EE$-triangle $\xymatrix{A \ar[r]^c &C^1 \ar[r] &C^2 \ar@{-->}[r] &}$ where $C^1,C^2\in \C$. $K$ admits $\EE$-triangles $\xymatrix{K' \ar[r] &P \ar[r]^p &K \ar@{-->}[r] &}$ and $\xymatrix{K \ar[r]^i &I \ar[r] &K'' \ar@{-->}[r] &}$ where $P,I\in \mathcal P\cap \mathcal I$, $K,K''\in \widetilde{\K}$.

Let $f:A\to K$ be a morphism, then there is a morphism $r:C^1\to K$ such that $f=rc$, $c$ factors through $p$, hence $\Hom_{\B/(\mathcal P\cap \mathcal I)}(\h_{\C},\widehat{\K})=0$.

Let $g:K\to A$ be a morphism, since we have the following commutative diagram
$$\xymatrix{
\Omega C^1 \ar@{=}[r] \ar[d] &\Omega C^1 \ar[d]\\
\Omega C^2 \ar[d] \ar[r] &P \ar[r]^p \ar[d] &C^2 \ar@{=}[d] \ar@{-->}[r]&\\
A \ar[r]^c \ar@{-->}[d] &C^1 \ar[r] \ar@{-->}[d] &C^2 \ar@{-->}[r] &\\
&& &&
}
$$
Then we have the following commutative diagram
$$\xymatrix{
K \ar[ddr]_g \ar@{.>}[dr]^3 \ar[rrr]^-i &&& I \ar@{.>}[ddl]^(.25)1 \ar@{.>}[dl]_2 \ar[r] &K'' \ar@{.>}[d]_{4=c'}\\
&\Omega C^2 \ar[d] \ar[r] &P \ar[d] \ar[rr]_p &&C^2\\
&A \ar[r]^c  &C^1 \\
}
$$
The numbers for the morphisms denote the order that we get them. Since $K''$ also admits an $\EE$-triangle
$$\xymatrix{K'' \ar[r]^{i''} &I'' \ar[r] &K''' \ar@{-->}[r] &}$$
where $I''\in \mathcal P\cap \mathcal I$ and $K'''\in \widetilde{\K}$, we get $c'$ factors through $I''$, then $c'$ factors through $p$, hence  morphism $3$ factors through $I$, then $g$ factors through $I$. This means $\Hom_{\B/(\mathcal P\cap \mathcal I)}(\widehat{\K},\h_{\C})=0$.

Assume that $X$ is an indecomposable object, it admits an $\EE$-triangle $$\xymatrix{X \ar[r] &K^X\oplus  C^0 \ar[r] &C^X \ar@{-->}[r] &}$$ where $K^X\in \widetilde{\K}$ and $C^0,C^X\in \C$. By Lemma \ref{mainlem}, $K^X$ admits an $\EE$-triangle $$\xymatrix{K_0 \ar[r] &P_0 \ar[r] &K^X \ar@{-->}[r] &}$$ where $K_0\in \widetilde{\K}$ and $P_0\in \mathcal P$. Hence we have the following commutative diagram
$$\xymatrix{
K_0 \ar[d]_{k_0} \ar@{=}[r] &K_0 \ar[d]\\
Y \ar[d] \ar[r] &P_0\oplus C^0 \ar[r] \ar[d] &C^X \ar@{=}[d] \ar@{-->}[r] &\\
X \ar[r] \ar@{-->}[d] &K^X\oplus  C^0 \ar[r] \ar@{-->}[d] &C^X \ar@{-->}[r] &\\
&& &&
}
$$
where $Y\in \h_\C$, we have $k_0: K_0\xrightarrow{j_1} I \xrightarrow{j_2} Y$ where $I \in \mathcal P\cap \mathcal I$, then $j_1$ factors through $k_0$, hence $Y$ is a direct summand of $X\oplus I$. If $Y$ is a direct summand of $I$, we have $X\in \K$. Otherwise $Y=X\oplus I'$. which implies $X\in \h_{\C}$ by Lemma \ref{direct summand}.
\end{proof}

\begin{lem}\label{maincor2}
If $\B/(\mathcal P\cap \mathcal I)=\widehat{\K}/(\mathcal P\cap \mathcal I)\oplus \h_\C/(\mathcal P\cap \mathcal I)$, then $\widehat{\K}/(\mathcal P\cap \mathcal I),\h_{\C}/(\mathcal P\cap \mathcal I)$ are extriangulated subcategories of $\B/(\mathcal P\cap \mathcal I)$.
\end{lem}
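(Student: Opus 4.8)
The plan is to deduce everything from the single general principle that an \emph{extension-closed} subcategory of an extriangulated category is itself extriangulated, with the restricted bifunctor and the restricted realization (\cite[Remark 2.18]{NP}). Since $\B/(\mathcal P\cap\mathcal I)$ is extriangulated by \cite[Proposition 3.30]{NP}, it is enough to show that $\widehat{\K}/(\mathcal P\cap\mathcal I)$ and $\h_{\C}/(\mathcal P\cap\mathcal I)$ are closed under extensions inside $\D:=\B/(\mathcal P\cap\mathcal I)$. Write $\A_1=\widehat{\K}/(\mathcal P\cap\mathcal I)$ and $\A_2=\h_{\C}/(\mathcal P\cap\mathcal I)$. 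By the hypothesis (which is exactly what Lemma~\ref{maincor1} supplies), inside $\D$ every object $M$ splits as $M\cong M_1\oplus M_2$ with $M_i\in\A_i$, and $\Hom_{\D}(\A_1,\A_2)=\Hom_{\D}(\A_2,\A_1)=0$. So it suffices to prove the following general fact: whenever an extriangulated category $\D$ decomposes in this way, each $\A_i$ is extension-closed.

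First I would verify extension-closedness of $\A_1$. Let $X\xrightarrow{f}Y\xrightarrow{g}Z\overset{\delta}{\dashrightarrow}$ be an $\EE$-triangle in $\D$ with $X,Z\in\A_1$, and write $Y\cong Y_1\oplus Y_2$ with $Y_i\in\A_i$, with split inclusions $\iota_i$ and projections $\pi_i$. The component $\pi_2 f\colon X\to Y_2$ lies in $\Hom_{\D}(\A_1,\A_2)=0$ and the component $g\iota_2\colon Y_2\to Z$ lies in $\Hom_{\D}(\A_2,\A_1)=0$; hence $f=\iota_1 f_1$ and $g=g_1\pi_1$ for suitable $f_1\colon X\to Y_1$ and $g_1\colon Y_1\to Z$. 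Now $g$ is a weak cokernel of $f$ (exactness at the middle term of $\Hom_{\D}(Z,-)\to\Hom_{\D}(Y,-)\to\Hom_{\D}(X,-)$ for an $\EE$-triangle, see \cite{NP}), and $\pi_2 f=0$, so $\pi_2$ factors as $\pi_2=pg$ for some $p\colon Z\to Y_2$. Composing with $\iota_2$ and using $g\iota_2=0$ gives $1_{Y_2}=\pi_2\iota_2=p(g\iota_2)=0$ in $\D$; thus $Y_2$ is a zero object of $\D$ and $Y\cong Y_1\in\A_1$. Hence $\A_1$ is extension-closed, and the same argument with $\A_1$ and $\A_2$ interchanged (the hypotheses are symmetric in the two summands) shows that $\A_2$ is extension-closed.

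Finally I would invoke \cite[Remark 2.18]{NP}: restricting the bifunctor $\EE$ and the realization $\mathfrak{s}$ of $\D$ to $\A_i^{\mathrm{op}}\times\A_i$ turns $\widehat{\K}/(\mathcal P\cap\mathcal I)$ and $\h_{\C}/(\mathcal P\cap\mathcal I)$ into extriangulated subcategories of $\B/(\mathcal P\cap\mathcal I)$, which is the assertion (and it retroactively legitimizes the $\oplus$ notation already used in the hypothesis). I do not expect a serious obstacle; the one point requiring a little care is the step ``$\pi_2 f=0\Rightarrow \pi_2$ factors through $g$'', which is precisely the weak-cokernel property of a deflation in an extriangulated category, together with routine biproduct bookkeeping in the Krull--Schmidt category $\D$ (in particular, that $\A_1$ and $\A_2$ are closed under isomorphisms and direct summands in $\D$).
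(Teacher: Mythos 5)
Your proof is correct, and it reaches the conclusion by a more abstract route than the paper. The paper argues entirely inside $\B$: for $\widehat{\K}$ it first notes $Y\in\K$, then takes an indecomposable direct summand $C\in\C$ of $Y$, uses $\Hom_{\B/(\mathcal P\cap \mathcal I)}(C,Z)=0$ to factor $C\to Y\to Z$ through $\mathcal P\cap\mathcal I$, and deduces (lifting through the deflation as in Remark \ref{useful}) that $C$ is a direct summand of $X\oplus I$, hence lies in $\mathcal P\cap\mathcal I$; for $\h_{\C}$ it writes $Y=Y_1\oplus Y_2$ with $Y_2\in\widetilde{\K}$ and kills $Y_2$ by the same direct-summand argument. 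You instead isolate a single general fact --- in an extriangulated category that decomposes additively into two Hom-orthogonal subcategories, each summand is extension-closed --- and run it in the quotient $\D=\B/(\mathcal P\cap\mathcal I)$, where the weak-cokernel property of a deflation converts the Hom-vanishing directly into $1_{Y_2}=0$. The underlying mechanism is identical (Hom-orthogonality forces the unwanted summand of $Y$ to vanish, and the weak-cokernel property of $\D$ is itself proved by the very lifting argument the paper performs explicitly), but your version treats the two subcategories uniformly, avoids invoking $Y\in\K$ and the case analysis on $\C$-summands, and yields a reusable statement. The one point needing care --- that $\widehat{\K}$ and $\h_{\C}$ contain $\mathcal P\cap\mathcal I$ and are closed under direct summands, so that being isomorphic in $\D$ to one of their objects implies membership --- you flag correctly, so there is no gap.
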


\begin{proof}
It is enough to show that $\widehat{\K}$ and $\h_\C$ are extension closed.

Let $\xymatrix{X\ar[r]^{x} &Y\ar[r]^{y} &Z \ar@{-->}[r] &}$ be an $\EE$-triangle where $X,Z\in \widehat{\K}$. We already have $Y\in \K$. If $Y$ has a indecomposable direct summand $C \in \C$. Since $\Hom_{\B/(\mathcal P\cap \mathcal I)}(C,Z)=0$, then $C$ is a direct summand of some $X\oplus I$ where $I\in \mathcal P\cap \mathcal I$. Hence $C\in \mathcal P\cap \mathcal I$.

Now let $\xymatrix{X\ar[r]^{x} &Y\ar[r]^{y} &Z \ar@{-->}[r] &}$ be an $\EE$-triangle where $X,Z\in \h_\C$. We have $Y=Y_1\oplus Y_2$ where $Y_1\in \h_\C$ and $Y_2\in \widetilde{\K}$. Since $\Hom_{\B/(\mathcal P\cap \mathcal I)}(Y_2,Z)=0$, $Y_2$ is a direct summand of some $X\oplus I'$ where $I'\in \mathcal P\cap \mathcal I$. Hence $Y_2=0$ and $Y\in \h_\C$.
\end{proof}

\begin{prop}\label{mainprop}
Let $\C$ be a pre-cluster tilting subcategory of $\B$, then the following statements are equivalent.
\begin{itemize}
\item[(a)] $\C$ is the category of projective-injective objects of $\B$.
\item[(b)] $\K/C=\B/\C$.
\item[(c)] $\h_{\C}=\C$.
\end{itemize}
\end{prop}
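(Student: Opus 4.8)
The plan is to establish the three conditions are equivalent by the cycle (a)$\Rightarrow$(b)$\Rightarrow$(c)$\Rightarrow$(a), the unifying observation being that each of (a), (b), (c) is in fact equivalent to the single statement $\K=\B$, where $\K:=\C^{\bot_1}={}^{\bot_1}\C$, so that $(\C,\K)$ and $(\K,\C)$ are the two cotorsion pairs attached to the pre-cluster tilting subcategory $\C$ by Lemma~\ref{CP} and its dual. Two elementary facts are used throughout. First, $\C\subseteq\K$ because $\C$ is rigid, and $\K$ is closed under finite direct sums and direct summands because it has the form $\C^{\bot_1}$. Second, $\mathcal P\subseteq\C$ and $\mathcal I\subseteq\C$: for $P\in\mathcal P$ the $\EE$-triangle $V\to U\to P\overset{\sigma}{\dashrightarrow}$ supplied by the cotorsion pair $(\C,\K)$ has $\sigma\in\EE(P,V)=0$, so $P$ is a direct summand of $U\in\C$; the statement for $\mathcal I$ is dual, via $(\K,\C)$. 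In particular $\mathcal P\cap\mathcal I\subseteq\C$.

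For (a)$\Rightarrow$(b): if $\C=\mathcal P\cap\mathcal I$ then every object of $\C$ is projective, so $\EE(\C,-)=0$ and hence $\K=\C^{\bot_1}=\B$; thus $\K/\C=\B/\C$. For (b)$\Rightarrow$(c): I would first upgrade $\K/\C=\B/\C$ to $\K=\B$. Given $X\in\B$, condition (b) provides $K\in\K$ with $\overline X\cong\overline K$ in $\B/\C$; by the standard argument (factor $1_X-\psi\phi$ through an object $C\in\C$, where $\overline\phi,\overline\psi$ are the inverse isomorphisms) $X$ is a direct summand of $K\oplus C$, which lies in $\K$, so $X\in\K$ by the first observation. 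Now let $X\in\h_\C=\CoCone(\C,\C)$, with an $\EE$-triangle $X\to C^0\to C^1\overset{\delta}{\dashrightarrow}$ and $C^0,C^1\in\C$; since $X\in\K=\C^{\bot_1}$ we get $\delta\in\EE(C^1,X)=0$, so the triangle splits and $X$ is a direct summand of $C^0\in\C$. Hence $\h_\C\subseteq\C$, and as $\C\subseteq\h_\C$ always, $\h_\C=\C$.

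For (c)$\Rightarrow$(a): if $\h_\C=\C$ then the heart $\h_\C/\C$ of the cotorsion pair $(\C,\K)$ is the zero category, so the associated cohomological functor $H_\C\colon\B\to\h_\C/\C$ is identically zero; since $H_\C(B)=0$ if and only if $B\in\K$ — the analogue for the cotorsion pair $(\C,\K)$ of the property of $H$ recalled in the text before Lemma~\ref{direct summand} — this forces $\B=\K$. Then $\B=\C^{\bot_1}$ gives $\EE(\C,\B)=0$, i.e. $\C\subseteq\mathcal P$, and $\B={}^{\bot_1}\C$ gives $\C\subseteq\mathcal I$; combined with the second observation this yields $\C=\mathcal P=\mathcal I=\mathcal P\cap\mathcal I$.

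The step I expect to be delicate is (c)$\Rightarrow$(a), precisely the use of the cohomological functor of the cotorsion pair $(\C,\K)$ to detect membership in $\K$: this is parallel to, but not literally, the property recalled for the pair $(\U,\V)$, so it must be justified by running the Liu--Nakaoka heart construction for $(\C,\K)$ itself. A purely elementary substitute seems awkward, since in an extriangulated category there is no long exact sequence extending past the $\EE$-term, so the heart-theoretic route is the natural one; every other step is routine manipulation of the cotorsion-pair $\EE$-triangles together with the definitions of projective and injective objects.
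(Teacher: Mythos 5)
Your proof is correct and follows essentially the same route as the paper: all three conditions are reduced to $\K=\B$, with (c)$\Rightarrow$(a) handled exactly as in the paper via the cohomological functor of the cotorsion pair $(\C,\K)$ and the fact that $H(B)=0$ forces $B\in\K$. Your extra details (the explicit splitting argument showing $\h_\C\subseteq\C$, and the verification that $\mathcal P,\mathcal I\subseteq\C$) merely spell out steps the paper leaves implicit.
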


\begin{proof}
(b) $\Longrightarrow$ (c): $\h_{\C}\subseteq \K$, and $\h_{\C}\cap \K=\C$, hence $\h_{\C}=\C$.

(c) $\Longrightarrow$ (a): For any object $B$ we have $H(B)\in \h_{\C}/\C=0$, hence $B\in \K$. We know that $\C$ is the category of projective-injective objects of $\K=\B$.

(a) $\Longrightarrow$ (b): By definition $(\C,\B)$ is a cotorsion pair, hence $\B=\K$ and then $\K/\C=\B/\C$.
\end{proof}

\textbf{Now we can prove Theorem \ref{main}.}

\begin{proof}
By Theorem \ref{main1}, $(\C,\K),(\K,\C)$ are cotorsion pairs.

(a) $\Longrightarrow$ (b): when $\C$ is cluster tilting, we have $\C\subseteq \U \subseteq \K=\C$, hence $\C=\U$.

(b) $\Longrightarrow$ (a): we have cotorsion pairs $(\K,\C),(\C,\K)$, hence $\K=\C$, which implies $\C$ is a cluster tilting subcategory of $\B$.

%(a) $\Longrightarrow$ (d): since $\C$ is cluster tilting, $\K=\C$, then $\B/\K=\B/\C$ is abelian and $\K/\C=0$ is also abelian.
%(d) $\Longrightarrow$ (a) is by
(a) $\Longrightarrow$ (d): Since $\C$ is cluster tilting, then $\K=\C$ and $\U=\C=\V$, by \cite[Theorem 3.2]{LN}, we know that $\B/\U=\B/\V=\B/\C$ are abelian.

(d) $\Longrightarrow$ (a): Assume $\widetilde{\K}\neq 0$, then by Lemma \ref{maincor2}, $\h/(\mathcal P\cap \mathcal I)$ is an extriangulated subcategory of $\B/(\mathcal P\cap \mathcal I)$. Since $\C/(\mathcal P\cap \mathcal I)\neq 0$, by Proposition \ref{mainprop}, $\h/(\mathcal P\cap \mathcal I)$ is non-zero. Now by Lemma \ref{mainlem} and Lemma \ref{maincor2}, $\B/(\mathcal P\cap \mathcal I)$ is a direct sum of two non-zero extriangulated subcategories, a contradiction.

(a) $\Longrightarrow$ (c) is by \cite[Theorem 3.2]{LN}.

(c) $\Longrightarrow$ (a):  let $\V=\C$, the other arguments are the same as ``(d) $\Longrightarrow$ (a)".
\end{proof}

This theorem immediately yields the following important conclusion.

\begin{cor}
Let $\B/(\mathcal P\cap \mathcal I)$ be connected and $\C$ a pre-cluster tilting subcategory of $\B$. Then $\C$ is cluster tilting if and only if $\B/\C$ is abelian.
\end{cor}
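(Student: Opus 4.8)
The plan is to obtain this corollary as a direct specialization of Theorem~\ref{main}: all that is needed is to build, from the pre-cluster tilting subcategory $\C$, the two cotorsion pairs required by that theorem, and to verify that the subcategory ``$\C=\U\cap\V$'' occurring there is precisely our $\C$.

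First I would apply Lemma~\ref{CP}: since $\C$ is rigid and strongly contravariantly finite, $(\C,\C^{\bot_1})$ is a cotorsion pair, and dually, since $\C$ is rigid and strongly covariantly finite, $({^{\bot_1}}\C,\C)$ is a cotorsion pair. Because $\C$ is pre-cluster tilting we have $\C^{\bot_1}={^{\bot_1}}\C$, so putting $\U:=\C$ and $\V:=\C^{\bot_1}$ gives two cotorsion pairs $(\U,\V)$ and $(\V,\U)$ on $\B$. Next I would identify the common part: rigidity $\EE(\C,\C)=0$ forces $\C\subseteq\C^{\bot_1}$, hence $\U\cap\V=\C\cap\C^{\bot_1}=\C$. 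I would also record that $\mathcal P\cap\mathcal I\subseteq\C$: for $P\in\mathcal P$ the $\EE$-triangle $V_P\to C_P\to P\dashrightarrow$ provided by the cotorsion pair $(\C,\C^{\bot_1})$, with $C_P\in\C$, splits since $P$ is projective, so $P$ is a direct summand of $C_P$ and thus lies in $\C$; in particular $\mathcal P\cap\mathcal I\subseteq\mathcal P\subseteq\C$. Hence all hypotheses of Theorem~\ref{main} hold, with its ``$\C$'' equal to ours.

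It then remains only to read off the equivalence (a)~$\Leftrightarrow$~(c) of Theorem~\ref{main}, which says exactly that $\C$ is cluster tilting if and only if $\B/\C$ is abelian, the connectedness of $\B/(\mathcal P\cap\mathcal I)$ being common to both statements. I do not expect any real obstacle here, as the substantive content is already concentrated in Theorem~\ref{main} (and, behind it, in Lemma~\ref{mainlem} and its corollaries); the one mildly delicate point is the identification $\U\cap\V=\C$, which relies solely on the rigidity of $\C$.
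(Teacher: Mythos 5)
Your proposal is correct and follows essentially the same route as the paper, which likewise deduces the corollary from Theorem~\ref{main} after noting (via Lemma~\ref{CP} and its dual, as in the discussion preceding Theorem~\ref{main1}) that a pre-cluster tilting $\C$ yields the cotorsion pairs $(\C,\C^{\bot_1})$ and $(\C^{\bot_1},\C)$ with $\C\cap\C^{\bot_1}=\C$. Your verification that $\mathcal P\cap\mathcal I\subseteq\C$ and the identification $\U\cap\V=\C$ are exactly the details the paper leaves implicit.
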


\begin{proof}
This follows that Proposition \ref{prop0} and Theorem \ref{main}.
\end{proof}

Theorem \ref{main} generalizes \cite[Theorem 7.3]{B} for the following reason: If $\B$ is a triangulated category with shift functor $[1]$, then $\mathcal P=0=\mathcal I$. The category $\B$ has a Serre functor $\mathbb{S}$, then $\C^{\bot_1}={^{\bot_1}}(\mathbb{S}[-2]\C)$. The subcategory $\C$ of $\B$ is functorially finite and rigid, by Lemma \ref{CP} we have two cotorsion pairs $(\C,\C^{\bot_1})$ and $({^{\bot_1}}\C,\C)$. Note that if $\mathbb{S}\C=\C[2]$, then $\C^{\bot_1}={^{\bot_1}}\C$, which implies $\C$ is pre-cluster tilting.

\end{document}